\newtheorem{thm}{Theorem}[section]
\newtheorem*{thm*}{Theorem}
\newtheorem{cor}[thm]{Corollary}
\newtheorem*{cor*}{Corollary}
\newtheorem{lem}[thm]{Lemma}
\newtheorem{rmk}[thm]{Remark}
\newtheorem{prop}[thm]{Proposition}
\newcommand*{\defeq}{\mathrel{\vcenter{\baselineskip0.5ex \lineskiplimit0pt\hbox{\scriptsize.}\hbox{\scriptsize.}}}=}
\newcommand*{\rdefeq}{\mathrel{=\hspace{-0.1cm}\vcenter{\baselineskip0.5ex \lineskiplimit0pt\hbox{\scriptsize.}\hbox{\scriptsize.}}}}
\newcommand*{\1}{\mathbbm1}
\let\originalleft\left
\let\originalright\right
\renewcommand{\left}{\mathopen{}\mathclose\bgroup\originalleft}
\renewcommand{\right}{\aftergroup\egroup\originalright}
\renewcommand{\P}{\Para}
\let\temp\phi
\let\phi\varphi
\let\varphi\temp
\newcommand{\cB}{\mathcal{B}}
\newcommand{\E}{\mathbb{E}}
\newcommand{\N}{\mathbb{N}}
\renewcommand{\P}{\mathbb{P}}
\newcommand{\R}{\mathbb{R}}
\renewcommand{\a}{\alpha}
\newcommand{\eps}{\epsilon}
\newcommand{\Th}{\Theta}
\newcommand{\q}{\quad\quad}
\numberwithin{equation}{section}
\begin{document}

\title{Error Rates for Large Deviations in the domain of an $\alpha=1$ stable law}
\author{
Jonny  Imbierski
\and
Dalia Terhesiu
\thanks{The affiliation of both authors is
Mathematisch Instituut,
University of Leiden, Einsteinweg 55,
2333, CC  Leiden, Netherlands.
Email addresses: j.f.imbierski@math.leidenuniv.nl and daliaterhesiu@gmail.com}
}
\maketitle

\begin{abstract}
    \noindent
    We obtain  error rates for large deviations of sums of i.i.d.\ random variables in, a particular case, of the domain of a non-symmetric infinite mean $\alpha=1$-stable law. The focus of this work is on the method of proof via analytic techniques rather than the particular result.
\end{abstract}

\section{Introduction}

Suppose that $(X_j)_{j\in\N}$ is a sequence of i.i.d.\ real-valued random variables on the probability space $(\Omega,\cB(\Omega),\P)$ and put $S_n\defeq X_1+\dotsb+X_n$ for each $n\in\N$.
Suppose that $(S_n-b_n)/a_n$ is in the domain of attraction of an $\a$-stable distribution, where $\alpha\in(0,2)$  and $(a_n)_{n\in\N}$ and $(b_n)_{n\in\N}$ are scaling and centring sequences given by
\begin{equation}\label{q:anbn}
    a_n=n\ell(a_n)\big(1+o(1)\big),\hspace{1cm}
    b_n=\left\{
        \begin{aligned}
            &0,                                 &&\alpha\in(0,1),\\
            &n\E(X_1\1_{\{|X_1|\leq a_n\}}),    &&\alpha=1,\\
            &n\E(X_1),                          &&\alpha\in(1,2).
        \end{aligned}\right.
\end{equation}
for some slowly varying function $\ell$.
In the i.i.d.\ setting, this is equivalent to assuming that the tails of $\P(X_1>x)$ are regularly varying with index $\alpha$, i.e.\
\begin{equation}\label{q:tailRV}
    \P(X_1>x)=px^{-\a}\ell(x)\big(1+o(1)\big)\quad\text{and}\quad\P(X_1<-x)=qx^{-\a}\ell(x)\big(1+o(1)\big),
\end{equation}
where $p,q>0, p+q=1$.
For $\a\in(0,1)\cup(1,2)$, large deviations were first given in \cite{heyde,Nag1,Nag2}. The results in these papers say that if~\eqref{q:tailRV} holds, then as $x(n)/a_n\to\infty$,
$$\P\big(S_n-b_n>x(n)\big)=pnx(n)^{-\a}\ell(x(n))\big(1+o(1)\big)\quad{and}\quad\P(S_n-b_n<-x(n))=qnx(n)^{-\a}\ell(x(n))\big(1+o(1)\big).$$
The $\a=1$ stable law was partially treated in~\cite{Nag2}, and~\cite{DDS08} and obtains the same result but only in the range $x(n)\ge \delta b_n$, for some $\delta>0$.
The case of the $\a=1$ stable law, with no restriction on the range of $x(n)$, that is in the full range $x(n)/a_n\to\infty$, was treated only recently in \cite{Berger19}. \medskip

In this work we are interested in obtaining error terms in large deviation, and here we restrict to a particular case of $\alpha=1$ with infinite mean, but to obtain error rates as in Theorem~\ref{t:LD} below, we only deal with a reduced range of $x$.
Our focus is on giving a method of proof that can provide error rates, which can be adapted to other values of $\alpha$, see~\cite{IT}.\medskip

 One simple example of interest is the case where $X_1$ is a real-valued random variable with the following  tail behaviour, for positive constants $p,q$ with $p+q=1$:
 there exists $x_0 > 0$ such that
\begin{equation}\label{q:tail}
    \P(X_1>x)=\frac{p}{x}, \quad \P(X_1<-x)=\frac{q}{x}
    \qquad \text{ for all } |x| > x_0.
\end{equation}

The current method of proof exploits heavily a precise control as $t\to 0$ of the characteristic function of $X_1$ given by $\Psi(t)\defeq\int_{-\infty}^{\infty}e^{itx}\,d\P(X_1\leq x)$.
Under assumption~\eqref{q:tail}, there is a $t_0 > 0$ such that
\begin{equation}\label{q:Psi}
\Psi(t)\big(1+o(1)\big)=1-c i|t|\log\left|t\right|+C_0 i t+C |t|, \qquad \text{ for all } |t| > t_0,
\end{equation}
for real constants $c,C_0, C$ (which, in particular, depend on $p,q$).
This precise asymptotic follow from \cite[Theorem 2]{AD98} (see also~\cite{IL}).
Given this precise asymptotic of $\Psi(t)$,
\begin{equation*}
        \Psi'(t)\big(1+o(1)\big)=-c i\log\left|t\right|+C',\q
        \Psi''(t)\big(1+o(1)\big)=-c it^{-1},\q\q\text{ for all } |t| > t_0,
\end{equation*}
for some complex constant $C'$.
We could  more generally assume that $\P(X_1>x)=\frac{p}{x}+\frac{c_1}{x^2}
+O\left(\frac{1}{x^3}\right)$ as $x\to\infty$ for some real constant $c_1$ and 
a similar behaviour for $\P(X_1<-x)$, so that $X_1$ is not symmetric. 
At the cost of some lengthy calculations one can show that as $t\to 0$,
\begin{equation}\label{q:Psi'Est}
    \begin{aligned}
        \Psi(t)&=1-c i|t|\log\left|t\right|+C_0 i t+C |t|+O(t^2\left|\log\left|t\right|\right|)\\
        \Psi'(t)    &=-ci\log\left|t\right|+ C'+O(\left|t\log\left|t\right|\right|),\quad
        \Psi''(t)   =-cit^{-1}+O(\left|\log\left|t\right|\right|),
    \end{aligned}
\end{equation}
for some real constants $c,C, C_0$ and some complex constant $C'$.
Again, the expression for $\Psi(t)$ follows from~\cite[Theorem 2]{AD98}, while the expressions of $\Psi'(t),\Psi''(t)$ require some work (as the derivatives of the $O(t^2\left|\log\left|t\right|\right|)$ term need to be estimated), which we omit.
Instead, we will phrase some assumptions in terms of the behaviour of $\Psi(t)$ as $t\to 0$.\medskip

In addition to~\eqref{q:Psi'Est}, we assume
\begin{equation}\label{q:tail1}
    \P(X_1>x)=\frac{p}{x}+O\left(\frac{1}{x^2}\right), \qquad \P(X_1<-x)=\frac{q}{x}+O\left(\frac{1}{x^2}\right).
\end{equation}
It should be noted that the precise form of the $O\left(\frac{1}{x^2}\right)$
follows from~\eqref{q:Psi'Est} and vice versa.
\medskip
%

Under~\eqref{q:tail1}, recall from~\eqref{q:anbn} that  $a_n=n(1+o(1))$ and $b_n=n\log n(1+o(1))$ as $n\to\infty$.
To state the main results, write $N=N(n)\defeq x(n)+b_n$ with $x(n)/a_n\to\infty$ and note that $$\frac N{a_n}=\frac Nn\big(1+o(1)\big)\geq\big(1+o(1)\big)\log n\to\infty,$$ as $n\to\infty$.

\begin{thm}\label{t:LD}
    Assume~\eqref{q:Psi'Est} and~\eqref{q:tail1} (so $\alpha = 1$).
    Fix a large finite number $a > 2$.
    Put $N=N(n)\defeq x(n)+b_n$ and assume that $x(n)$ is such that $n (\log N)^2 = o(N)$ as $n\to\infty$.
    Then
    \begin{align}\label{eq:mainELD}
        \P(S_n>N)=n\P(X_1>N)+O\left(\frac{n^2(\log N)^2}{N^{2}}\right)+O\left(\frac{n}{N^{2-2/a}}\right)\qquad \text{ as }n\to\infty,
    \end{align}
    and a similar statement holds for $\P(S_n<-N)$.
\end{thm}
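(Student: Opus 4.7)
My plan is to combine a big-jump decomposition, which isolates the leading $n\P(X_1>N)$ contribution, with a Fourier/characteristic-function tail bound for truncated sums, leveraging the asymptotics~\eqref{q:Psi'Est}--\eqref{q:tail1} on $\Psi$.

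\textbf{Big-jump reduction.} Fix a cutoff $M$ of order $N$. Let $A_k$ be the event that exactly $k$ indices $j$ satisfy $|X_j|>M$. By~\eqref{q:tail1}, $\P(|X_1|>M) = O(1/M)$, so $\P(A_{\ge 2}) \le \binom{n}{2}\P(|X_1|>M)^2 = O(n^2/N^2)$, which is absorbed into the first error term in~\eqref{eq:mainELD}. On $A_0$ one has $S_n = \bar S_n := \sum_j X_j\1_{|X_j|\le M}$; on $A_1$, conditioning on the index and value of the big jump gives
\[
\P(\{S_n>N\}\cap A_1) = n\!\int_{|x|>M}\!\P(\bar S_{n-1} > N - x)\,(1-\P(|X_1|>M))^{n-1}\,dF(x).
\]
The portion $x>N$ yields the main term $n\P(X_1>N)$ up to a lower-tail correction $\P(\bar S_{n-1}\le N-x)$; the intermediate range $M<x\le N$, the negative range $x<-M$, and the $A_0$ contribution $\P(\bar S_n>N)$ all reduce to tail bounds on $\bar S_k$ for $k\le n$ at deviations of order $N$.

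\textbf{Analytic tail bound.} The main analytic input is
\[
\P(\bar S_k > y) \;\le\; C\,k^2(\log y)^2/y^2 \;+\; C\,k/y^{2-2/a},
\]
valid for $y$ in the relevant range. Write $\Psi_M$ for the characteristic function of $Y = X_1\1_{|X_1|\le M}$; integration by parts using~\eqref{q:tail1} shows $\Psi_M$ inherits the expansion~\eqref{q:Psi'Est} with controlled errors. Smoothing $\bar S_k$ by convolution with a kernel $K$ of width $\sim 1$ (so the resulting density is bounded), Fourier inversion gives
\[
\P(\bar S_k + U > y) = \frac{1}{2\pi}\int e^{-ity}\Psi_M(t)^k\,\frac{\hat K(t)}{it}\,dt.
\]
Two integrations by parts in $t$ produce a factor $1/y^2$ and bring out $(\Psi_M^k)'' = k\Psi_M^{k-1}\Psi_M'' + k(k-1)\Psi_M^{k-2}(\Psi_M')^2$. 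Substituting $\Psi_M'(t) = O(|\log|t||)$ and $\Psi_M''(t) = O(1/|t|)$ from~\eqref{q:Psi'Est} and splitting $t\in\R$ into the regions $|t|<1/y$, $1/y\le|t|\le 1$, and $|t|>1$, yields the bound. The $k^2(\log y)^2/y^2$ piece comes from the $(\Psi_M')^2$ term, and the $y^{-2/a}$ correction from the $|t|>1$ tail, where only moments of order $a$ are available.

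\textbf{Assembly and main obstacle.} Inserting the analytic tail bound back into the big-jump reduction and choosing $M\asymp N$, the hypothesis $n(\log N)^2=o(N)$ makes all error terms genuinely small and yields~\eqref{eq:mainELD}; the statement for $\P(S_n<-N)$ follows by the symmetric argument interchanging $p$ and $q$. The principal obstacle is the Fourier tail bound: classical Chebyshev or Chernoff methods only yield $O(k/y)$, too weak by a factor of $y$. Extracting the extra $1/y$ requires two integrations by parts in the Fourier integral, and hence both derivatives of $\Psi$ in~\eqref{q:Psi'Est}; carefully tracking the $\alpha=1$-specific logarithmic singularities $|t|\log|t|$ and $\log|t|$ through these manipulations is the main technical difficulty and is the source of the $(\log N)^2$ factor in the final bound.
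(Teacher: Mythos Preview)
Your big-jump decomposition is a genuinely different route from the paper's, and in principle it is a reasonable strategy: the paper never truncates and instead works directly with the difference $\Psi(t)^n-n\Psi(t)$ on the Fourier side, using the algebraic identity
\[
\Psi(t)^n-n\Psi(t)=(\Psi(t)-1)\sum_{k=1}^{n-1}\big(\Psi(t)^k-1\big)-(n-1)
\]
to manufacture the factor $\Psi(t)-1=O(|t\log|t||)$, which is exactly what cancels the $1/(it)$ in the inversion formula. After that cancellation the paper still cannot simply integrate by parts twice (the second derivative of $\Theta(t)=(\Psi(t)-1)/(it)$ is $O(t^{-2})$); it does one integration by parts and then gains the second factor of $1/N$ by a modulus-of-continuity/oscillatory-integral argument (their Lemmas~3.1 and~3.2). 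This cancellation-plus-oscillation mechanism is the heart of the proof.

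Your proposal has a real gap precisely at the analogous step. The formula you write,
\[
\P(\bar S_k+U>y)=\frac{1}{2\pi}\int e^{-ity}\,\Psi_M(t)^k\,\frac{\hat K(t)}{it}\,dt,
\]
has a non-integrable $1/t$ singularity at $t=0$ (since $\Psi_M(0)^k\hat K(0)=1$), so the integral is only a principal value; two integrations by parts would produce a term $\Psi_M(t)^k\hat K(t)\cdot(-2/(it^3))$, which is not integrable near~$0$. Truncating $X_1$ does not remove this: it is a feature of the CDF inversion formula, not of the tails of $X_1$. If instead you work with the smoothed density and do two integrations by parts there, you obtain $y^2 f_{\bar S_k+U}(y)=O\!\big(k(\log N)+k^2\big)$ from the terms you list, but integrating this from $y$ to~$\infty$ gives only $\P(\bar S_k>y)=O(k^2/y)$, one power of $y$ short of what you claim. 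In other words, you have not exhibited any cancellation that would replace the paper's factorisation of $\Psi^n-n\Psi$, and without it the target bound $\P(\bar S_k>y)=O\big(k^2(\log y)^2/y^2\big)$ does not follow from the argument you describe. (A side remark: the $O(n/N^{2-2/a})$ error in the paper comes from the smoothing random variable $Y\in L^a$ used to localise the Fourier integral, not from a large-$|t|$ tail with ``only moments of order~$a$''; your truncated variable $\bar X_1$ has all moments, so that explanation cannot be right either.)
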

The message of Theorem~\ref{t:LD} is that obtaining error rates for large deviations as in the setting of Theorem~\ref{t:LD} comes at the cost of reducing the range of $x(n)$ for which the result is optimal.
We do not know if this is a consequence of the current method of proof or if there is some cancellation effect that we failed to capture.\medskip

\begin{rmk}\label{rem:sym}
 If the tails in \eqref{q:tailRV} are symmetric, i.e.\ $p=q=\frac12$, then the logarithmic term in \eqref{q:Psi} and \eqref{q:Psi'Est} disappears (i.e.\ $c=0$), and also the factor $(\log N)^2$ in the first error term
 in \eqref{eq:mainELD} disappears, and the range condition simplifies to
 $n  = o(N)$. The proof of this can be deduced by following our proof with $c = 0$ throughout, but we will skip the details.\\
 
 We believe that following the scheme in~\cite{IT} one can extend Theorem~\ref{t:LD} (and possibly to other than $\alpha=1$ values) to the set up of dynamical systems (in particular, Gibbs Markov maps such as the Gauss map). However, the required work is involved and we do not consider this here.
\end{rmk}

In Section~\ref{sec:strategy}, we outline the strategy of proof along with the main ingredient, namely we rephrase Theorem~\ref{t:LD} in terms of the characteristic function $\Psi(t)$.
Section~\ref{s:pr} is allocated to deriving estimates of various quantities expressed in terms of $\Psi(t)$ and completing the proof of Theorem~\ref{t:LD}.

\section{Main ingredients of the proofs}\label{sec:strategy}

The starting point is Section~\ref{sec:char}, where we show that the statements\footnote{The statements on $\P(S_n<-N)$ follow similarly
by working with $\P(-S_n>N)$.}  on $\P(S_n>N)$ in Theorem~\ref{t:LD} follow once we obtain useful estimates for the integral
$$
I^*\defeq \frac{1}{2\pi}\int_{-\eps}^\eps\big(e^{-itN}-e^{-it(N+g(n))}\big)\frac{\Psi(t)^n-n\Psi(t)}{it}\psi_Y(t)\,dt,
$$
for some small fixed $\eps>0$, and for $g(n)$ a suitably chosen diverging sequence, and $\psi_Y(t)$ is explained in the following remark.

\begin{rmk}\label{rem:Y}
    The integral $I^*$ is derived from a Fourier inversion formula: see \eqref{q:Fourier}.
    The domain of integration is restricted to $[-\eps,\eps]$ by introducing the function $\psi_Y(t)$, which is the characteristic function of a random variable $Y$ chosen on the same probability space $(\Omega,\mathcal{B}(\Omega),\mathbb{P})$ such that $Y\in L^a$ for any $a\geq1$ and with support $\mathrm{supp}\,\psi_Y=(-\eps,\eps)$ with $\psi_Y(t)$ vanishing at $t=\pm\eps$.
    The existence of such a random variable $Y$ follows from, for instance, \cite[Proposition~3.8]{Gou-asip}.
    In fact, the function $\psi_Y(t)$ can be ensured to be $C^\infty$ although $C^2$ is sufficient for the current aim.
    The argument of restricting the domain of integration in this way goes back to \cite[Section~3, Chapter~XVI]{Feller}.
\end{rmk}
%
The translation of the problem of the rates into estimates for $I^*$ is captured in Proposition~\ref{p:gn}.
A main difficulty in expanding the integral $I^*$ in terms of $N$ and $n$ comes from the fact that the factor of $t^{-1}$ in the integrand blows up (as $t\to 0$). Since we want to let $g(n)\to\infty$ (in fact, we will require $g(n)$ much larger than $N$), we cannot exploit obtaining a $t$ from $e^{-itN}-e^{-it(N+g(n))}$.
To overcome this, we rewrite $\Psi(t)^n-n\Psi(t)$ as 
\begin{align}\label{q:PsiRewrite}
    \Psi(t)^n-n\Psi(t)
        &=\big(\Psi(t)^{n-1}-1\big)\Psi(t)-(n-1)\Psi(t)=\big(\Psi(t)-1\big)\sum_{k=1}^{n-1}\Psi(t)^k-(n-1)\Psi(t)\nonumber\\
        &=\big(\Psi(t)-1\big)\sum_{k=1}^{n-1}\big(\Psi(t)^k-1\big)+(n-1)\big(\Psi(t)-1\big)-(n-1)\Psi(t)\nonumber\\
        &=\big(\Psi(t)-1\big)\sum_{k=1}^{n-1}\big(\Psi(t)^k-1\big)-(n-1).
\end{align}
Due to the first equality in our assumption~\eqref{q:Psi'Est}, the effect of $t^{-1}$ will be compensated by $(\Psi(t)-1)$.
In order to expand $I^*$ in terms of $N$ and $n$,
we will also use integration by parts (and thus, derivatives of $\Psi$) and a modulus of continuity argument as in \cite[Chapter 1]{Katzn}.
The calculations are not trivial and require several technical steps, but the strategy is as described in this paragraph.

\subsection{Rephrasing the Problem in Terms of the Characteristic Function}\label{sec:char}

We translate the statement of Theorem~\ref{t:LD} into a statement on the characteristic function $\Psi(t)$ by using the method in \cite[Section 2]{IT}, but recording the error terms along the way.
The results below are analogues of \cite[Lemma 2.1]{IT} and \cite[Proposition 2.2]{IT} tailored to Theorem~\ref{t:LD}.

\begin{lem}\label{lemY}
    Let $(Z_n)_{n \in \N}$ be a sequence of random variables and $Y$ another random variable, defined on the same probability space but independent of all the $Z_n$.
    Assume that there exist three sequences $N=N(n)$, $z_N=z_{N(n)}$ and $y_N=y_{N(n)}$ such that, as $n\to\infty$, it holds that $N\to\infty$, $z_N\to\infty$ with $z_N=o(N)$, $y_N\to 0$ and
    \begin{itemize}
        \item[\em(a)] $y_N=o\left(\P(Z_n > N)\right)$ and $\P(Z_n > N \pm z_N) = \P(Z_n > N)+ O(y_N)$.
        \item[\em(b)]$\P(|Y| > z_N) = O(y_N)$.
    \end{itemize}
    Then $\P(Z_n+Y> N) = \P(Z_n > N)+ O(y_N)$ as $n \to \infty$.
\end{lem}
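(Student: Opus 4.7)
The plan is to decouple the contribution of $Y$ from that of $Z_n$ by splitting the event $\{Z_n+Y>N\}$ according to whether $|Y|\leq z_N$ or $|Y|>z_N$. Tail assumption (b) disposes of the second portion at cost $O(y_N)$, while on $\{|Y|\leq z_N\}$ the sum $Z_n+Y$ can be sandwiched between $Z_n-z_N$ and $Z_n+z_N$. The independence of $Y$ and $Z_n$ is what allows the event on $Z_n$ to be separated from the event on $Y$. Finally, assumption (a) absorbs the perturbation $\pm z_N$ into the error $O(y_N)$.

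For the upper bound I would use the set inclusion $\{Z_n+Y>N,\,|Y|\leq z_N\}\subseteq\{Z_n>N-z_N\}$ together with (b) to write
\[
\P(Z_n+Y>N)\leq \P(Z_n>N-z_N)+\P(|Y|>z_N)=\P(Z_n>N-z_N)+O(y_N),
\]
and then apply (a) to convert the right hand side to $\P(Z_n>N)+O(y_N)$.

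For the lower bound I would restrict to $|Y|\leq z_N$ and use the opposite inclusion $\{Z_n>N+z_N,\,|Y|\leq z_N\}\subseteq\{Z_n+Y>N\}$. Since $Y$ is independent of $Z_n$, this yields
\[
\P(Z_n+Y>N)\geq \P(Z_n>N+z_N)\,\P(|Y|\leq z_N)=\bigl(\P(Z_n>N)+O(y_N)\bigr)\bigl(1-O(y_N)\bigr),
\]
where the last equality uses (a) on the first factor and (b) on the second. Expanding and using $\P(Z_n>N)\leq 1$ to absorb the cross term shows the right hand side equals $\P(Z_n>N)+O(y_N)$.

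There is no serious obstacle here; the argument is entirely elementary. The only mild point of care is that the lower bound produces a product $(1-O(y_N))(\P(Z_n>N)+O(y_N))$ whose cross term is bounded by $O(y_N)$ only because $\P(Z_n>N)\leq 1$. The assumption $y_N=o(\P(Z_n>N))$ is not actually needed to drive the estimates themselves; it serves only to guarantee that the resulting $O(y_N)$ correction is genuinely of smaller order than the leading term, so that the conclusion is informative when plugged into the proof of Theorem~\ref{t:LD}.
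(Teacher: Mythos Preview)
Your proof is correct and follows essentially the same approach as the paper's: split on $\{|Y|\leq z_N\}$ versus $\{|Y|>z_N\}$, use the inclusions $\{Z_n>N+z_N,\,|Y|\leq z_N\}\subseteq\{Z_n+Y>N\}\subseteq\{Z_n>N-z_N\}\cup\{|Y|>z_N\}$, and then apply (a) and (b). The only cosmetic difference is that for the lower bound the paper writes the subtractive estimate $\P(Z_n+Y>N)\geq \P(Z_n>N+z_N)-\P(|Y|>z_N)$ directly (without actually using independence, despite mentioning it), whereas you invoke independence to factor the intersection and obtain a multiplicative bound; both routes immediately reduce to $\P(Z_n>N)+O(y_N)$.
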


\begin{proof}
Since $Z_n$ and $Y$ are independent,
\begin{align*}
    \P(Z_n > N+z_N)
        &=\P(Z_n > N+z_N,\, |Y| > z_N) +  \P(Z_n > N+z_N ,\, |Y| \leq z_N)\\
        &\leq\P(|Y| > z_N) +  \P(Z_n+Y > N).
\end{align*}
Therefore, using (a) and (b), $$\P(Z_n+Y > N) \geq \P(Z_n > N+z_N) - \P(|Y| > z_N) = \P(Z_n > N)+O(y_N).$$
For the other inequality, we have
\begin{align*}
    \P(Z_n \leq N-z_N)
        &=\P(Z_n \leq N-z_N ,\, |Y| > z_N) +  \P(Z_n \leq N-z_N,\, |Y| \leq z_N)\\
        &\leq\P(|Y| > z_N)+\P(Z_n+Y \leq N).
\end{align*}
Therefore,
\begin{eqnarray*}
\P(Z_n+Y > N) &=& 1-\P(Z_n+Y \leq N) \leq 1-\P(Z_n \leq N-z_N) + \P(|Y| > z_N) \\
&=& \P(Z_n > N-z_N) + \P(|Y| > z_N) = \P(Z_n > N)+O(y_N),
\end{eqnarray*}
where in the last equality we have used (a) and (b).
This finishes the proof.
\end{proof}

The next proposition is tailored to the statement of Theorem~\ref{t:LD}, but as in~\cite[Section 2]{IT} it can be adjusted to other rates and other $\alpha\ne 1$ stable laws.

\begin{prop}\label{p:gn}
    Assume the set up of Theorem~\ref{t:LD}.
    Let $a>2$ be a very large positive finite number.
    Let $Y$ be as in remark~\ref{rem:Y} and
    let $g(n)$ be so that $g(n)\geq N^2$.
    Then equation~\eqref{eq:mainELD} holds if and only if 
    \begin{equation}\label{eq:right}
         \left|\int_{-\eps}^\eps\left(e^{-itN}-e^{-it(N+g(n))}\right)\psi_Y(t)\,\frac{\Psi(t)^n-n\Psi(t)}{it}\, dt\right|
            =O\left(\frac{n}{N^{2-2/a}}\right)
    \end{equation}
    as $n\to\infty$.
\end{prop}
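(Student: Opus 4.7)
The plan is to realise $I^*$ as the Fourier representation of a probability difference, and then use Lemma~\ref{lemY} to pass from the $Y$-smoothed tails to the unsmoothed tails of $S_n$ and $X_1$. Since $\psi_Y$ is the characteristic function of the independent variable $Y$, with $\mathrm{supp}\,\psi_Y=(-\eps,\eps)$ and $\psi_Y(\pm\eps)=0$, the L\'evy inversion formula applied to $Z+Y$ with $Z=S_n$ and with $Z=X_1$ gives
\begin{align*}
    \P(N<S_n+Y\leq N+g(n))&=\frac{1}{2\pi}\int_{-\eps}^{\eps}\frac{e^{-itN}-e^{-it(N+g(n))}}{it}\,\Psi(t)^n\psi_Y(t)\,dt,\\
    \P(N<X_1+Y\leq N+g(n))&=\frac{1}{2\pi}\int_{-\eps}^{\eps}\frac{e^{-itN}-e^{-it(N+g(n))}}{it}\,\Psi(t)\psi_Y(t)\,dt.
\end{align*}
Subtracting $n$ times the second from the first immediately identifies
$$
I^*=\P(N<S_n+Y\leq N+g(n))-n\,\P(N<X_1+Y\leq N+g(n)).
$$

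Next I would apply Lemma~\ref{lemY} twice, once with $Z_n=S_n$ and once with $Z_n=X_1$, at both thresholds $N$ and $N+g(n)$. A natural choice is $z_N=N^{2/a}$ and $y_N=n/N^{2-2/a}$. Hypothesis (b) is immediate from $Y\in L^a$ and Markov's inequality, giving $\P(|Y|>z_N)=O(z_N^{-a})=O(N^{-2})=O(y_N)$. Hypothesis (a) for $X_1$ follows from \eqref{q:tail1}: $\P(X_1>N\pm z_N)-\P(X_1>N)=O(z_N/N^2)=O(y_N/n)$. Hypothesis (a) for $S_n$ is the delicate point; for it I would combine the large deviation asymptotic of \cite{Berger19}, $\P(S_n>x)=pn/x\,(1+o(1))$ in the present range, with the monotonicity of the tail to obtain $\P(S_n>N\pm z_N)-\P(S_n>N)=O(nz_N/N^2)=O(y_N)$.

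After these reductions, the hypothesis $g(n)\geq N^2$ together with the tail bounds $\P(S_n>x)=O(n/x)$ and $\P(X_1>x)=O(1/x)$ makes the contributions at the upper endpoint $N+g(n)$ of order $O(n/N^2)$. Combining everything I arrive at
$$
I^*=\P(S_n>N)-n\P(X_1>N)+O\left(\frac{n}{N^{2-2/a}}\right)+O\left(\frac{n}{N^2}\right),
$$
and both directions of the `if and only if' of the proposition follow by comparing with the error stated in~\eqref{eq:mainELD}; in particular, in the range $n(\log N)^2=o(N)$ the $O(n^2(\log N)^2/N^2)$ term in~\eqref{eq:mainELD} sits comfortably between the two explicit errors above.

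The main obstacle is the effective verification of hypothesis (a) for $Z_n=S_n$: the Berger result only supplies a non-effective $o(1)$ inside $\P(S_n>N)=pn/N\,(1+o(1))$, so the naive substitution $N\mapsto N\pm z_N$ yields only $o(n/N)$ and not the required $O(nz_N/N^2)$. I would circumvent this either by a bootstrap (apply Lemma~\ref{lemY} first with a cruder $y_N$ to produce an improved a priori tail for $S_n$, and then iterate), or by exploiting the smoothness already present in $S_n+Y$, whose density can be controlled via $\psi_Y$ to bound $\P(N-z_N<S_n\leq N+z_N)$ directly by $O(nz_N/N^2)$.
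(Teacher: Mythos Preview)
Your overall scheme---identify $I^*$ via inversion as $\P(N<S_n+Y\le N+g(n))-n\,\P(N<X_1+Y\le N+g(n))$, control the upper endpoint with $g(n)\ge N^2$, and then strip off $Y$ via Lemma~\ref{lemY}---is exactly the paper's. The gap is precisely where you flag it: verifying hypothesis~(a) for $Z_n=S_n$. The appeal to \cite{Berger19} gives only a non-effective $o(1)$ and does not deliver the increment bound $\P(S_n>N\pm z_N)-\P(S_n>N)=O(nz_N/N^2)$; your bootstrap sketch does not obviously close, and the $S_n+Y$-density idea is on the right track but left unexecuted.

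The paper's resolution is to treat the two directions of the ``if and only if'' asymmetrically and, in each, to use the \emph{assumed} side of the equivalence to supply condition~(a). Forward direction: once \eqref{eq:mainELD} is assumed, one has $\P(S_n>M)=n\P(X_1>M)+(\text{errors of \eqref{eq:mainELD}})$ for every $M$ in range; since $X_1$ already satisfies~(a) with error $y_N=N^{-(2-2/a)}$, this transfers~(a) to $S_n$ with error $ny_N$, and Lemma~\ref{lemY} with $Z_n=S_n$ applies directly. Backward direction: the assumed bound \eqref{eq:right}, combined with the smoothed formulae \eqref{eq:mformx1Y}--\eqref{eq:mformY} and \eqref{eqPPY}, first yields $\P(S_n+Y>M)=n\P(X_1>M)+O(ny_N)$ for all $M$ in range; this verifies~(a) for the variable $S_n+Y$, and one then runs the Lemma~\ref{lemY} argument with $S_n+Y$ in the role of $Z_n$ (the proof of that lemma is symmetric in $Z_n$ and $Z_n+Y$) to obtain $\P(S_n>N)=\P(S_n+Y>N)+O(ny_N)$. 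No external large-deviation input is needed in either direction. A minor side remark: for the endpoint $N+g(n)$ the paper uses the elementary $\P(S_n>M)\le n\,\P(X_1>M/n)=O(n^2/M)$ rather than your $O(n/M)$, avoiding the implicit reliance on Berger there as well.
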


\begin{proof} Recall that $X_1$ and $S_n$ are random variables taking values in $\R$.
So, $\P(X_1=N)=\P(X_1=N+g(n))=0$ and $\P(S_n=N)=\P(S_n=N+g(n))=0$.
     For a fixed $N\in\R$, we start from the inversion formulae
    \begin{equation}\label{q:Fourier}
        \begin{aligned}
            \P\big(X_1 \in (N, N+g(n)]\big) &=\lim_{T\to\infty}\frac{1}{2\pi }\int_{-T}^T\frac{e^{-itN}-e^{-it(N+g(n))}}{it}\Psi(t)\, dt,\q\text{and}\\ 
            \P\big(S_n\in (N, N+g(n)\big)   &=\lim_{T\to\infty}\frac{1}{2\pi }\int_{-T}^T\frac{e^{-itN}-e^{-it(N+g(n))}}{it}\Psi(t)^n\, dt,
        \end{aligned}
    \end{equation}
    for any choice of $g(n)$.\medskip
    
    We want to allow that $N(n),\, g(n)\to\infty$ so as to obtain the desired formula for $\P(S_n > N)$.
    By our assumption $g(n)\gg N^2$,
    \[\P(X_1 > N+g(n))\ll \frac{1}{g(n)} \frac{1}{\left(1+\frac{N}{g(n)}\right)} \ll \frac{1}{N^{2}}\] and
    \[\P(S_n > N+g(n)) \le \sum_{j=0}^{n-1} \P\left(|X_j|>\frac{N+g(n)}{n}\right) \ll \frac{n^2}{N^2}.\]
    Therefore, as $n\to\infty$, 
    \begin{equation}\label{eq:mform}
        \P(S_n>N)=\lim_{T\to\infty}\frac{1}{2\pi }\int_{-T}^T\frac{e^{-itN}-e^{-it(N+g(n))}}{it}\Psi(t)^n\, dt+ O\left(\frac{n^2}{N^2}\right)
    \end{equation}
    and
    \begin{equation}\label{eq:mformx1}
        \P(X_1 >N)=\lim_{T\to\infty}\frac{1}{2\pi }\int_{-T}^T\frac{e^{-itN}-e^{-it(N+g(n))}}{it}\Psi(t)\, dt + O\left(\frac{1}{N^2}\right).
    \end{equation}
    
    Next, we argue that one can adjust the domain of integration in~\eqref{eq:mform}.
     Let $Y$ be as in Remark~\ref{rem:Y}.
    Since $Y\in L^a$ and $\psi_Y$ is supported on $(-\eps,\eps)$, a repetition of the previous arguments gives that the analogues of~\eqref{eq:mformx1} and~\eqref{eq:mform} are:
    \begin{equation}
    \label{eq:mformx1Y}
     \P(X_1+Y>N)=\frac{1}{2\pi }\int_{-\eps}^\eps\frac{e^{-itN}-e^{-it(N+g(n))}}{it}\psi_Y(t)\Psi(t)\, dt+O\left(\frac{1}{N^2}\right).
    \end{equation}
    and
    \begin{equation}\label{eq:mformY}
     \P(S_n+Y>N)=\frac{1}{2\pi}\int_{-\eps}^\eps\frac{e^{-itN}-e^{-it(N+g(n))}}{it}\psi_Y(t) \Psi(t)^n\, dt+O\left(\frac{n^2}{N^2}\right).
    \end{equation}
    
    To conclude we need to compare the tails of $X_1$ and $X_1+Y$, and of $S_n$ and $S_n+Y$.
    To do so, we will use Lemma~\ref{lemY} several times, making different choices for $Z_n$ appearing in that lemma.
    We will first apply Lemma~\ref{lemY} to $Z_n\equiv X_1$ with $a > 2$, $N=N(n)$ as in the statement of the current proposition and
    \begin{itemize}
        \item $z_N=z_{N(n)}=N^{2/a}=o(N)$.
        \item $y_N=y_{N(n)}=N^{-(2-2/a)}\to 0$. 
    \end{itemize}
    
    Recall that $Y\in L^a$. With the above choice of $z_N$,
    we see that $\P(Y>z_N)=o(z_N^{-a})=o(N^{-2})=o(y_N)$.
    Thus, we can apply Lemma~\ref{lemY} with $Z_n \equiv X_1$ since both (a) and (b) hold.
    This gives
    \begin{equation}\label{eqPPY}
        \P(X_1>N) = \P(X_1+Y > N)+O(y_N)=\frac{1}{2\pi }\int_{-\eps}^\eps\frac{e^{-itN}-e^{-it(N+g(n))}}{it}\psi_Y(t)\Psi(t)\, dt+O(y_N),
    \end{equation}
    where in the last equality we have used~\eqref{eq:mformx1Y}.
    Next we want to obtain a version with $S_n$ in~\eqref{eq:mformY}.\medskip
    
    \textbf{From~\eqref{eq:mainELD} to~\eqref{eq:right}}.
    If $S_n$ satisfies~\eqref{eq:mainELD}, i.e.\\ $\P(S_n>N) = n\P(X_1 > N)+O(y_N)$, then the above application of Lemma~\ref{lemY} to $X_1$ yields $\P(S_n > N+z_N) = \P(S_n > N)+O(ny_N)$.
    We already know that $\P(|Y| > z_N)=o(y_N)$ so we may apply Lemma~\ref{lemY} with $Z_n = S_n$, $z_N$ and $ny_N$ to conclude that $$\P(S_n>N) = \P(S_n+Y>N)+O(ny_N).$$
    Equation~\eqref{eq:right} follows by combining~\eqref{eq:mformx1Y}
    and~\eqref{eq:mformY}.\medskip
    
    \textbf{From~\eqref{eq:right} to~\eqref{eq:mainELD}}.
    If~\eqref{eq:right} holds, then using~\eqref{eq:mformY} and  \eqref{eqPPY}, we obtain
    $$\P(S_n+Y > N) = n \P(X_1 > N)+O\left(ny_N\right).$$
    Recall that $\P(|Y| > z_N) =o(y_N)$ so we may apply Lemma~\ref{lemY} with $Z_n = S_n+Y$ to conclude that
    \begin{equation}\label{eqSSP}
    \P(S_n+Y > N) = \P(S_n > N)+O(ny_N).
    \end{equation}
    Hence, after subtracting \eqref{eq:mformx1Y} $n$ times
    from~\eqref{eq:mformY}  and applying~\eqref{eqPPY} and~\eqref{eqSSP},
    we obtain that
    \begin{align*}
        \left|\P(S_n>N)-n \P(X_1>N)\right|
            &=\frac{1}{2\pi} \left|\int_{-\eps}^\eps (e^{-itN}-e^{-it(N+ g(n))})\psi_Y(t)\,\frac{\Psi(t)^n-n\Psi(t)}{it} \, dt\right|+O(ny_N)\\
            &= O\left(ny_N\right) = O\left( \frac{n}{ N^{2-2/a}}\right).\qedhere
    \end{align*}
\end{proof}

\subsection{Simplifying the expression of the integral in Proposition~\ref{p:gn} via Equation~\eqref{q:PsiRewrite} }

By Proposition~\ref{p:gn}, it suffices to obtain useful estimates (in $N=N(n)$ and $n$)
of
$$|I^*|=\left| \frac{1}{2\pi} \int_{-\eps}^\eps (e^{-itN}-e^{-it(N+ g(n))})
 \psi_Y(t)\,\frac{\Psi(t)^n-n\Psi(t)}{it} \, dt\right|.$$
 By~\eqref{q:PsiRewrite}, $
    \Psi(t)^n-n\Psi(t)
        =\big(\Psi(t)-1\big)\sum_{k=1}^{n-1}\big(\Psi(t)^k-1\big)-(n-1)$.
Set
\begin{equation}\label{q:FDef}
    F(t,n)\defeq\sum_{k=1}^{n-1}\big(\Psi(t)^k-1\big)
\end{equation}
 and note that
\begin{align*}
    \nonumber I^*&=\frac{1}{2\pi} \int_{-\eps}^\eps\frac{e^{-itN}-e^{-it(N+g(n))}}{it}\psi_Y(t)\big(\Psi(t)-1\big)F(t,n)\,dt-(n-1)\int_{-\eps}^\eps\frac{e^{-itN}-e^{-it(N+g(n))}}{it}\psi_Y(t)\,dt.
\end{align*}
But $\frac{(n-1)}{2\pi}\int_{-\eps}^\eps\frac{e^{-itN}-e^{-it(N+g(n))}}{it}\psi_Y(t)\,dt
=(n-1)\P\big(N<Y<N+g(n)\big)\leq n\P(Y>N)=O(n/N^2)$.
So, 
\begin{equation}\label{q:IDef}
    I^*=\frac{1}{2\pi} \int_{-\eps}^\eps\big(e^{-itN}-e^{-it(N+g(n))}\big)\frac{\Psi(t)-1}{it}\psi_Y(t)F(t,n)\,dt+O\left(\frac n{N^2}\right)\defeq I+O\left(\frac n{N^2}\right).
\end{equation}

Putting all these together, we obtain the following.

\begin{cor}\label{cor:I}
    Let $I$ be as in~\eqref{q:IDef}, and fix $a>2$ large. 
    Let $g(n)\ge N^2$.
    Then the conclusion of Theorem~\ref{t:LD} follows if $|I|=O((n\log N)^2/N^2)+O\left(n/N^{2-2/a}\right)$.
\end{cor}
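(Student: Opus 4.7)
The plan is to reduce Corollary~\ref{cor:I} directly to the direction of the proof of Proposition~\ref{p:gn} that derives~\eqref{eq:mainELD} from~\eqref{eq:right}, with the only adjustment being that a slightly weaker bound on the key integral is allowed. First, I would invoke~\eqref{q:IDef}, which gives $I^{*} = I + O(n/N^{2})$. Since $a > 2$ forces $2 - 2/a < 2$, one has $n/N^{2} \le n/N^{2-2/a}$, and so the hypothesized bound on $|I|$ upgrades to
$$
|I^{*}| = O\!\left(\frac{(n\log N)^{2}}{N^{2}}\right) + O\!\left(\frac{n}{N^{2-2/a}}\right).
$$

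Next, I would replay the corresponding portion of the proof of Proposition~\ref{p:gn} with this larger bound on $|I^{*}|$ in place of $O(n/N^{2-2/a})$. Subtracting $n$ times~\eqref{eqPPY} from~\eqref{eq:mformY} and substituting the upgraded bound yields
$$
\P(S_{n} + Y > N) = n\,\P(X_{1} > N) + O\!\left(\frac{(n\log N)^{2}}{N^{2}}\right) + O\!\left(\frac{n}{N^{2-2/a}}\right).
$$
A final application of Lemma~\ref{lemY} with $Z_{n} = S_{n} + Y$, $z_{N} = N^{2/a}$ and $y_{N} = N^{-(2-2/a)}$, whose hypotheses (a) and (b) were already verified inside the proof of Proposition~\ref{p:gn} and are insensitive to the precise bound on $|I^{*}|$, removes the $+Y$ at the cost of at most $O(n y_{N}) = O(n/N^{2-2/a})$, so that we arrive at exactly~\eqref{eq:mainELD}.

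The argument is essentially mechanical, and I do not anticipate any substantive obstacle. The only real check is a bookkeeping one: that the new $O((n\log N)^{2}/N^{2})$ term appearing in the upgraded bound on $|I^{*}|$ is compatible with the claimed conclusion. This is immediate, since precisely this term features as one of the two error terms in~\eqref{eq:mainELD}. The genuine work of the paper lies in estimating $|I|$ itself, which is the subject of Section~\ref{s:pr}; Corollary~\ref{cor:I} simply packages Proposition~\ref{p:gn} into the form needed for that analysis.
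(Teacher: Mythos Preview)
Your proposal is correct and is precisely an expansion of the paper's one-line proof, which simply invokes Proposition~\ref{p:gn} and equation~\eqref{q:IDef}. The only slight imprecision is your claim that hypothesis~(a) of Lemma~\ref{lemY} for $Z_n=S_n+Y$ is ``insensitive to the precise bound on $|I^*|$'': in fact (a) inherits the extra $O((n\log N)^2/N^2)$ term through the formula for $\P(S_n+Y>N)$, but this is harmless since that term already appears in~\eqref{eq:mainELD}.
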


\begin{proof}
     This follows from Proposition~\ref{p:gn} and equation~\eqref{q:IDef}. 
\end{proof}

\section{Proof of Theorem~\ref{t:LD}}\label{s:pr}

\subsection{ Estimates for $F(t,n)$ defined in~\eqref{q:FDef} along with its derivatives and related quantities}\label{sec:Est}

Using Corollary~\ref{cor:I} and the definition of $I$ in~\eqref{q:IDef},
the main task from here onward is to obtain useful estimates for $F(t,n)$ 
to be used later during the integration by parts and modulus of continuity arguments (see Section~\ref{sec:I3} for details of this type of argument).
For use in the integration by parts, we need to estimate $F'$, while for use in modulus of continuity we will need an estimate for $F'(t,n)-F'(t-h,n)$, $0<h<\left|t\right|<\eps$, such that every term including a logarithm is explicit (see \eqref{q:F'Cont} below).\medskip

We start by recalling our assumption~\eqref{q:Psi'Est}
for real constants $c, C, C_0$ and complex constant $C'$,
as $t\to 0$:

    \begin{align}
           \label{eq:psiat0}  \Psi(t)&=1-c i|t|\log\left|t\right|+C_0 i t+C |t|+A(t),\text{ where } A(t)=O(t^2\left|\log\left|t\right|\right|),\\
        \label{eq:prime}\Psi'(t)    &=-ci\log\left|t\right|+ C'+ A'(t),\text{ where }A'(t)=O(\left|\log\left|t\right|\right|),\\
        \label{eq:second}\Psi''(t)   &=-ci|t|^{-1}+A''(t),\text{ where } A''(t)=O(\left|\log\left|t\right|\right|).
    \end{align}

Using~\eqref{eq:psiat0},
\begin{equation}\label{q:FEst}
    F(t,n)=\big(\Psi(t)-1\big)\sum_{j=1}^{n-1}\sum_{k=0}^{j-1}\Psi(t)^k=O(n^2\bigl|t\log\left|t\right|\bigr|).
\end{equation}

Differentiating the expression for $F(t,n)$ in \eqref{q:FDef} and applying \eqref{eq:prime}, we obtain
\begin{equation}\label{q:F'Est}
    F'(t,n)=\Psi'(t)\sum_{j=1}^{n-1}j\Psi(t)^{j-1}
        =\bigl(-ci\log\left|t\right|+C'\bigr)\sum_{j=1}^{n-1}j\Psi(t)^{j-1}+O\left(n^2\bigl|t\log\left|t\right|\bigr|\right).
\end{equation}

We are now interested in getting an estimate of $F'(t,n)-F'(t-h,n)$, $0<h<\left|t\right|<\eps$.
By~\eqref{eq:prime}, the term $A(t)$ in~\eqref{eq:psiat0} is differentiable.
By the mean value theorem, the following holds for some $t_0\in (t-h, t)$: $|A(t)-A(t-h)| \leq h |A'(t_0)|$. Thus, $|A(t)-A(t-h)|\ll h A'(t)=O(h|t\log\left|t\right|)$.
Therefore,
\begin{align}\label{q:PsiCont}
    \Psi(t)-\Psi(t-h)
        &=ci|t|\log\left|1-\frac ht\right|\pm ci h\log\left|t\right|+ C_1 h+O(h|t\log\left|t\right|),
\end{align}
for a complex constant $C_1$.\medskip   

By a similar argument, using~\eqref{eq:second} and the mean value theorem,
$|A'(t)-A'(t-h)|\ll h |A''(t)|=O(h|\log\left|t\right|)$.
This together with~\eqref{eq:prime} gives
\begin{align}\label{q:Psi'Cont}
    \Psi'(t)-\Psi'(t-h)
        &=ci\log\left|1-\frac ht\right|+O(h|\log\left|t\right|).
\end{align}
 Continuing from the first equality in~\eqref{q:F'Est},
and using
  \eqref{q:Psi'Cont} and~\eqref{q:PsiCont} together with~\eqref{eq:prime}, 
\begin{align*}
    &F'(t,n)-F'(t-h,n)\nonumber\\
    &\q=\big(\Psi'(t)-\Psi'(t-h)\big)\sum_{j=1}^{n-1}j\Psi(t)^{j-1}+\Psi'(t-h)\sum_{j=1}^{n-1}j\big(\Psi(t)^{j-1}-\Psi(t-h)^{j-1}\big)\nonumber\\
    &\q=\left(ci\log\left|1-\frac ht\right|+O(h\log\left|t\right|)\right)\sum_{j=1}^{n-1}j\Psi(t)^{j-1}\\
    &\q\q-ci\left(cit\log\left|1-\frac ht\right|+h C_1\right)\log\left|t-h\right|\sum_{j=1}^{n-1}\sum_{k=0}^{j-2}j\Psi(t)^k\Psi(t-h)^{j-2-k}\big)\nonumber\\
    &\q\q+O\left(t^2\log\left|1-\frac ht\right|\sum_{j=1}^{n-1}j^2\left|\Psi(t)\right|^{j-1}\right).\nonumber
\end{align*}
Recall that $h<t$. Thus, as $h\to 0$ and $t\to 0$, $\log\left|1-\frac ht\right|=h|t|^{-1}+O(h|t|^{-1})=h|t|^{-1}(1+o(1))$. Thus,
\[
 \left|\left(cit\log\left|1-\frac ht\right|+h C_1\right)\log\left|t-h\right|\sum_{j=1}^{n-1}\sum_{k=0}^{j-2}j\Psi(t)^k\Psi(t-h)^{j-2-k}\right|=O\left(h\sum_{j=1}^{n}j^2\log\left|t\right||\Psi(t)|^{j}\right)
\]
and
\begin{align}\label{q:F'Cont}
    F'(t,n)-F'(t-h,n)
    &=ci\log\left|1-\frac ht\right|\sum_{j=1}^{n-1}j\Psi(t)^{j-1}
    +O\left(h\sum_{j=1}^{n}j^2\log\left|t\right||\Psi(t)|^{j}\right)\nonumber\\
    &=cih|t|^{-1}(1+o(1))\sum_{j=1}^{n-1}j\Psi(t)^{j-1}+O\left(h\sum_{j=1}^{n}j^2\log\left|t\right||\Psi(t)|^{j}\right).
\end{align}

In Section~\ref{sec:MainIntegral}, we will work with the function $\Th(t)\defeq(\Psi(t)-1)/it$ and its derivatives.
Dividing~\eqref{eq:psiat0} by $it$, we get
\begin{align}\label{q:ThEst}
    \Th(t)=-c\log\left|t\right|+C_2+O\left(\bigl|t\log\left|t\right|\bigr|\right),
    \text{ where } C_2\text{ is a complex constant}.
\end{align}

Compute that $\Th'(t)=\frac{\Psi'(t)}{it}-\frac{\Psi(t)-1}{it^2}$
and that $\Th''(t)=\frac{\Psi''(t)}{it}-2\frac{\Psi'(t)}{it^2}-2\frac{\Psi(t)-1}{it^3}$. Using~\eqref{eq:prime},~\eqref{eq:second} and~\eqref{eq:psiat0} and taking advantage of the cancellations, we obtain that
\begin{equation}\label{q:Th'Est}
    \Th'(t)=-c|t|^{-1}+ O\left(\left|\log\left|t\right|\right|\right)\quad\text{and}\quad
    \Th''(t)=c|t|^{-2}+O(\left|t\right|^{-1}\log\left|t\right|),
\end{equation}

By~\eqref{q:ThEst}, $\Th(t)-\Th(t-h)=-c\log\left|1-\frac ht\right|+B(t)-B(t-h)$,
where $B(t)=O\left(\bigl|t\log\left|t\right|\bigr|\right)$. By the mean value theorem, $|B(t)-B(t-h)| \leq h|B'(t_0)|$ for $t_0\in [t-h, t]$.
Together with the first equality in~\eqref{q:Th'Est},
$|B(t)-B(t-h)|\ll h\log\left|t\right|$.
Thus,
\begin{align}\label{q:ThCont}
    \Th(t)-\Th(t-h)
        =ch|t|^{-1}+O\left(h\log\left|t\right|\right).
\end{align}

\subsection{Estimating the integral $I$ defined in~\eqref{q:IDef}}\label{sec:MainIntegral}

Recall that $\Th(t)\defeq(\Psi(t)-1)/it$ and $F(t,n)\defeq\sum_{j=1}^{n-1}(\Psi(t)^j-1)$.
Given $I$ defined in~\eqref{q:IDef}, we will write $I=I(N)-I(N+g(n))$, where
\begin{equation}\label{q:INDef}
    I(N)\defeq\int_{-\eps}^\eps e^{-itN}\Th(t)\psi_Y(t)F(t,n)\,dt\quad\text{and}\quad I\big(N+g(n)\big)\defeq\int_{-\eps}^\eps e^{-it(N+g(n))}\Th(t)\psi_Y(t)F(t,n)\,dt.
\end{equation}

\begin{prop}\label{p:IExp}
    As $n\to\infty$,
    \begin{align*}
        I (N) =O\left(\frac{n^2(\log N)^2}{N^2}\right),\quad I(N+g(n))=O\left(\frac{n^2(\log (N+g(n)))^2}{(N+g(n))^2}\right).
    \end{align*}
\end{prop}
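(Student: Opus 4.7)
\emph{Proof proposal.} The plan is to combine one integration by parts with a Katznelson-type modulus of continuity argument (see~\cite[Chapter~1]{Katzn}). I focus on $I(N)$; the bound for $I(N+g(n))$ follows from exactly the same scheme with $N$ replaced by $N+g(n)$ throughout.

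Set $H(t)\defeq\Th(t)\psi_Y(t)F(t,n)$. Since $\psi_Y$ vanishes at $\pm\eps$, integration by parts gives $I(N)=\frac{1}{iN}\int_{-\eps}^{\eps}e^{-itN}H'(t)\,dt$. With $h_N\defeq\pi/N$ and using $e^{-i(t-h_N)N}=-e^{-itN}$, the standard symmetrisation yields
$$I(N)=\frac{1}{2iN}\int_{\R}e^{-itN}\bigl[H'(t)-H'(t-h_N)\bigr]\,dt,$$
with $H'$ extended by zero outside $[-\eps,\eps]$. It therefore suffices to show $\int_{\R}|H'(t)-H'(t-h_N)|\,dt=O(n^2(\log N)^2/N)$, since the prefactor $1/(2N)$ then yields the claimed bound.

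I would split the domain of integration at $|t|=2h_N$. On $|t|\leq 2h_N$ a pointwise estimate is enough: the estimates $\Th(t)=O(\log|t|)$, $\Th'(t)=O(|t|^{-1})$, $F(t,n)=O(n^2|t\log|t||)$ and $F'(t,n)=O(n^2\log|t|)$ from Section~\ref{sec:Est} combine to give $|H'(t)|=O(n^2\log^2|t|)$, so $\int_{|t|\leq 2h_N}n^2\log^2|t|\,dt=O(n^2 h_N(\log h_N)^2)=O(n^2(\log N)^2/N)$, and the same bound applies to the translated copy. On $|t|>2h_N$ I would expand $H'=\Th'\psi_Y F+\Th\psi_Y'F+\Th\psi_Y F'$ and telescope each product via $A(t)B(t)-A(t-h_N)B(t-h_N)=(A(t)-A(t-h_N))B(t)+A(t-h_N)(B(t)-B(t-h_N))$. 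The necessary difference bounds are \eqref{q:PsiCont}, \eqref{q:Psi'Cont}, \eqref{q:F'Cont} and \eqref{q:ThCont}, together with $|\psi_Y(t)-\psi_Y(t-h_N)|=O(h_N)$ (smoothness), $|\Th'(t)-\Th'(t-h_N)|=O(h_N/t^2)$ (obtained by integrating the $\Th''$ estimate in~\eqref{q:Th'Est}), and $|F(t,n)-F(t-h_N,n)|=O(n^2 h_N\log|t|)$ (from \eqref{q:PsiCont} and $|\Psi|\leq 1$). Each resulting term has the schematic form $h_N n^2(\log|t|)^k/|t|^\ell$ with $\ell\in\{0,1\}$ and $k\leq 2$, and integrates to $O(h_N n^2(\log N)^2)=O(n^2(\log N)^2/N)$, as required.

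The delicate piece is the second summand in~\eqref{q:F'Cont}, namely $O\bigl(h_N\sum_{j=1}^{n}j^2\log|t|\,|\Psi(t)|^j\bigr)$: the crude bound $\sum j^2\leq n^3$ would yield $n^3$ in place of the required $n^2$. I would recover the correct power by a further split of the $t$-integral at $|t|\asymp 1/n$. On $|t|\leq 1/n$ use $\sum_{j=1}^n j^2|\Psi|^j=O(n^3)$ and gain a factor $1/n$ from the Lebesgue measure of the region; on $|t|\geq 1/n$ use the bound $|\Psi(t)|\leq 1-c'|t|$ valid for small $|t|$ (which follows from~\eqref{eq:psiat0} together with the characteristic-function constraint $|\Psi|\leq 1$, forcing the real constant $C$ to be non-positive), so that $\sum_j j^2|\Psi|^j=O(|t|^{-3})$. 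Both regions then contribute $O(n^2(\log N)^2/N)$. This tension between $n^3$ from the naive sum bound and $n^2$ from the final target is the main technical obstacle; the remaining steps are routine bookkeeping over the telescoping expansion.
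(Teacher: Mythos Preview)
Your approach is correct and is in fact more streamlined than the paper's. Both proofs begin with one integration by parts (the paper's equation~\eqref{q:ITerms}), but from there the paper treats the three summands $I_1,I_2,I_3$ heterogeneously: $I_1$ via a second integration by parts (Lemma~\ref{l:I1}), $I_2$ via a second integration by parts followed by an oscillatory-integral lemma (Lemma~\ref{lem:tec}) yielding $O(n^2(\log N)/N^2)$, and $I_3$ via the Katznelson modulus-of-continuity argument (Lemma~\ref{p:I3}). You instead apply the symmetrisation step uniformly to the full derivative $H'$ and bound the $L^1$ modulus of continuity directly, which collapses the casework and sidesteps Lemma~\ref{lem:tec} altogether. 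For the ``delicate piece'' the paper invokes the cited Lemma~\ref{l:MTLem} (from~\cite{MT22}) to bound $\sum_j j^2\int_0^\eps(\log t)^2|\Psi(t)|^j\,dt$ by $n^2(\log n)^2$, whereas your split at $|t|\asymp 1/n$ together with $|\Psi(t)|\leq 1-c'|t|$ is essentially an inline proof of that same fact.

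One minor gap: deducing only that $C$ is \emph{non-positive} from $|\Psi|\leq 1$ is not quite enough to conclude $|\Psi(t)|\leq 1-c'|t|$ with $c'>0$; you need $C<0$ strictly. This does follow, since if $C=0$ the imaginary part $-c|t|\log|t|+C_0t$ in~\eqref{eq:psiat0} would force $|\Psi(t)|^2=1+c^2t^2(\log|t|)^2(1+o(1))>1$ near~$0$. The paper's route has the advantage of modularity (Lemmas~\ref{lem:tec} and~\ref{l:MTLem} are reusable) and yields the marginally sharper $O(n^2(\log N)/N^2)$ for $I_2$; yours is shorter and self-contained.
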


We will provide the argument for $I(N)$. The argument for $I(N+g(n))$ is the same.

Recall that $\psi_Y$ is supported in $(-\eps,\eps)$, so $\psi_Y(\eps)=\psi_Y(-\eps)
=0$.
Integrating $I(N)$ by parts gives
\begin{align}\label{q:ITerms}
    I(N)
        &=\frac1{iN}\int_{-\eps}^\eps e^{-itN}\Th(t)\psi_Y'(t)F(t,n)\,dt\nonumber\\
        &\q+\frac1{iN}\int_{-\eps}^\eps e^{-itN}\Th'(t)\psi_Y(t)F(t,n)\,dt+\frac1{iN}\int_{-\eps}^\eps e^{-itN}\Th(t)\psi_Y(t)F'(t,n)\,dt\nonumber\\
        &\rdefeq I_1(N)+I_2(N)+I_3(N).
\end{align}

The treatment of $I_1(N)$ is simple and included in Lemmas~\ref{l:I1} below.
The integrals $I_2(N)$ and $I_3(N)$ are less straightforward 
and they will be treated in Subsections~\ref{sec:I2} and \ref{sec:I3} respectively.

\begin{lem}\label{l:I1}
    As $n\to\infty$,
    $I_1(N)=O(n^2/N^2)$.
\end{lem}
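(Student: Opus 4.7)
The plan is to integrate by parts once more in the definition of $I_1(N)$. Writing
$$I_1(N)=\frac1{iN}\int_{-\eps}^\eps e^{-itN}\,\Th(t)\psi_Y'(t)F(t,n)\,dt,$$
I take $dv=e^{-itN}\,dt$ and $u=\Th(t)\psi_Y'(t)F(t,n)$. Since $\psi_Y\in C^\infty$ with $\mathrm{supp}\,\psi_Y\subset(-\eps,\eps)$, every derivative $\psi_Y^{(k)}$ vanishes at $t=\pm\eps$, so the boundary terms at $\pm\eps$ are zero. The only subtlety is that $\Th(t)=-c\log|t|+C_2+O(|t\log|t||)$ has a logarithmic singularity at the origin, so strictly speaking the integration by parts is carried out on $[-\eps,-\d]\cup[\d,\eps]$ and one lets $\d\downarrow 0$. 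The ``interior'' boundary contributions are of the order $\Th(\pm\d)F(\pm\d,n)/N=O((\log\d)\cdot n^2\d|\log\d|/N)\to 0$, so they drop out. This yields
$$I_1(N)=-\frac1{N^2}\int_{-\eps}^\eps e^{-itN}\,\bigl(\Th(t)\psi_Y'(t)F(t,n)\bigr)'\,dt.$$

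Next I expand the derivative by the product rule into three pieces:
\begin{equation*}
\bigl(\Th\,\psi_Y'\,F\bigr)'=\Th'(t)\psi_Y'(t)F(t,n)+\Th(t)\psi_Y''(t)F(t,n)+\Th(t)\psi_Y'(t)F'(t,n),
\end{equation*}
and bound each pointwise on $[-\eps,\eps]$ using the estimates collected in Section~\ref{sec:Est}. Namely, from~\eqref{q:ThEst} one has $\Th(t)=O(|\log|t||)$, from~\eqref{q:Th'Est} one has $\Th'(t)=O(|t|^{-1})$, from~\eqref{q:FEst} one has $F(t,n)=O(n^2|t\log|t||)$, from~\eqref{q:F'Est} one has $F'(t,n)=O(n^2|\log|t||)$, and $\psi_Y',\psi_Y''$ are bounded because $\psi_Y\in C^\infty$ with compact support. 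The singularities compensate as follows: the first term is $O(n^2|\log|t||)$, the second is $O(n^2|t|(\log|t|)^2)$, and the third is $O(n^2(\log|t|)^2)$.

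Since $|\log|t||^k$ is integrable on $[-\eps,\eps]$ for every $k$ (and $\eps$ is fixed), each of the three integrals is $O(n^2)$. Putting this together yields $|I_1(N)|=O(n^2/N^2)$, as required. The main technical point to be careful about is the bookkeeping of the $\log|t|$ singularities at the origin during the integration by parts; once one knows that $F(t,n)$ vanishes at $t=0$ like $|t\log|t||$, everything is well-defined and integrable, and no further cancellation or modulus-of-continuity argument is needed — the estimate for $I_1$ is the straightforward one among the three integrals $I_1, I_2, I_3$ in~\eqref{q:ITerms}.
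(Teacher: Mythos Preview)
Your proof is correct and follows the same approach as the paper: integrate by parts once more, then bound the three resulting integrals using the estimates \eqref{q:ThEst}, \eqref{q:Th'Est}, \eqref{q:FEst}, \eqref{q:F'Est}, noting that $|\log|t||^k$ is integrable on $[-\eps,\eps]$. The only cosmetic difference is that the paper keeps the boundary term $[\,\cdot\,]_{-\eps}^\eps$ and bounds it by $O(n^2/N^2)$ (since $\Th,\psi_Y',F$ are bounded at $\pm\eps$), whereas you invoke the $C^\infty$ version of $\psi_Y$ from Remark~\ref{rem:Y} to make it vanish; both are fine, and your extra care about the interior endpoints at $\pm\d$ is harmless but unnecessary since the integrand's derivative is already absolutely integrable through the origin.
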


\begin{proof} Recall that $\psi_Y$ is $C^2$.
    Integrating by parts gives
    \begin{align*}
        I_1(N)
            &=\left[\frac{e^{-itN}}{N^2}\Th(t)\psi_Y'(t)F(t,n)\right]_{-\eps}^\eps-\frac1{N^2}\int_{-\eps}^\eps e^{-itN}\Th'(t)\psi_Y'(t)F(t,n)\,dt\\
            &\q-\frac1{N^2}\int_{-\eps}^\eps e^{-itN}\Th(t)\psi_Y''(t)F(t,n)\,dt-\frac1{N^2}\int_{-\eps}^\eps e^{-itN}\Th(t)\psi_Y'(t)F'(t,n)\,dt\\
            &=O\left(\frac{n^2}{N^2}\right)+O\left(\frac{n^2}{N^2}\int_0^\eps\left|\log t\right|\,dt+\frac{n^2}{N^2}\int_0^\eps t(\log t)^2\,dt
                +\frac{n^2}{N^2}\int_0^\eps(\log t)^2\,dt\right)=O\left(\frac{n^2}{N^2}\right),
    \end{align*}
    where in the last line, to estimate the first three terms  we have used~\eqref{q:FEst},~\eqref{q:Th'Est} together with~\eqref{q:ThEst}, while for the fourth terms we have used~\eqref{q:F'Est} and ~\eqref{q:ThEst}.
\end{proof}

\subsubsection{Estimating the integral $I_2$ defined in~\eqref{q:ITerms}}\label{sec:I2}

We first record a lemma that will be used to estimate various integrals, in particular
in dealing with $I_2$. 

\begin{lem}\label{lem:tec}
    Let $m, r\in\N$.
    As $M\to\infty$, $$\int_{\pi/M}^\eps e^{-it M} t^{-1}(\log t)^r\psi_Y(t)\Psi(t)^m\,dt=d_0(\log M)^r\big(1+o(1)\big)$$ for a constant $d_0$ depending on $m$ and $r$.
\end{lem}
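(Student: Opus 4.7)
The natural first step is the substitution $u=tM$, which turns the integral into
\[
    \int_\pi^{\eps M}e^{-iu}\,\frac{(\log u-\log M)^r}{u}\,\psi_Y(u/M)\,\Psi(u/M)^m\,du,
\]
so that the lower endpoint stabilizes at $\pi$ and the $-\log M$ is made visible in the integrand. Binomial expansion $(\log u-\log M)^r=\sum_{k=0}^r\binom{r}{k}(\log u)^k(-\log M)^{r-k}$ then splits the integral into contributions of the form $\binom{r}{k}(-\log M)^{r-k}J_M^{(k)}$, where
\[
    J_M^{(k)}\defeq\int_\pi^{\eps M}e^{-iu}\,\frac{(\log u)^k}{u}\,\psi_Y(u/M)\,\Psi(u/M)^m\,du.
\]
The full factor $(\log M)^r$ sits in the $k=0$ piece, so it suffices to show (i) $\lim_{M\to\infty}J_M^{(0)}$ exists (call it $c_0$), and (ii) $J_M^{(k)}=O(1)$ uniformly in $M$ for each $k\geq 1$. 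Then the $k\geq 1$ contributions are $O((\log M)^{r-k})=o((\log M)^r)$ and the stated asymptotic holds with $d_0=(-1)^r c_0$.

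For (i) I would fix a large auxiliary constant $A>\pi$ and split $J_M^{(0)}=\int_\pi^A+\int_A^{\eps M}$. On $[\pi,A]$ the slowly varying factor $\psi_Y(u/M)\Psi(u/M)^m$ converges uniformly to $\psi_Y(0)\Psi(0)^m=1$, so this part tends to $\int_\pi^A e^{-iu}/u\,du$. On $[A,\eps M]$ I would integrate by parts using $e^{-iu}\,du=d(ie^{-iu})$: the boundary term at $\eps M$ vanishes because $\psi_Y(\eps)=0$, the one at $A$ is $O(1/A)$, and the derivative of $\psi_Y(u/M)\Psi(u/M)^m$ is bounded using~\eqref{eq:prime} by $O(|\log(u/M)|/M)$. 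After the back-substitution $v=u/M$ the associated error is $O((\log M)^2/M)=o(1)$. Letting $M\to\infty$ and then $A\to\infty$ identifies $c_0=\int_\pi^\infty e^{-iu}/u\,du$, a conditionally convergent improper integral.

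For (ii) the same integration-by-parts applied directly to $J_M^{(k)}$ works: the boundary at $\eps M$ again vanishes, the one at $\pi$ is $O((\log\pi)^k)$, and the derivative $\partial_u[(\log u)^k\psi_Y(u/M)\Psi(u/M)^m/u]$ decomposes into a summand of absolute size $O((\log u)^k/u^2)$, whose integral on $[\pi,\infty)$ is finite, plus a summand of size $O((\log u)^k|\log(u/M)|/(uM))$, which after $v=u/M$ yields $O((\log M)^{k+2}/M)=o(1)$. Hence $J_M^{(k)}=O(1)$ uniformly in $M$ and reassembling the binomial sum gives the claimed $d_0(\log M)^r(1+o(1))$.

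The main bookkeeping difficulty is the log-singularity of $\Psi'$ at $0$: every differentiation of $\Psi(u/M)^m$ produces a factor $\Psi'(u/M)=O(|\log(u/M)|)$. The cleanest way to control these is to carry out the error estimates after the back-substitution $v=u/M$, where each $|\log(u/M)|$ becomes $|\log v|$ (integrable against $dv/v$ once one power of $v$ is spared) and the chain-rule factor $1/M$ absorbs any remaining power of $\log M$.
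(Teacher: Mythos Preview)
Your argument is correct, but it proceeds differently from the paper's. After the same substitution $u=tM$, the paper does \emph{not} expand $(\log u-\log M)^r$ binomially; instead it factors $(\log t)^r=(\log M)^r\bigl(\frac{\log t}{\log M}\bigr)^r$, sets $D(t)=\bigl(\frac{\log t}{\log M}\bigr)^r\psi_Y(t)\Psi(t)^m$, and treats the resulting oscillatory integral $\int_\pi^{\eps M}e^{-i\sigma}\sigma^{-1}D(\sigma/M)\,d\sigma$ by grouping over intervals $[2k\pi-\pi,2k\pi+\pi]$ and exploiting the cancellation $\cos(\sigma+\pi)=-\cos\sigma$ together with the mean-value bound $D(\tfrac{\sigma+\pi}{M})-D(\tfrac{\sigma}{M})=O((\sigma\log M)^{-1})$; this yields $J_k=O(k^{-2})$ and hence a bounded sum. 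That period-grouping argument is slightly shorter but, as written, only produces $O(1)$ for the oscillatory factor---enough for the $O((\log M)^r)$ bound actually used downstream, though not quite the convergence to a specific $d_0$ that the lemma's statement advertises. Your integration-by-parts route is a bit more work (you handle $r+1$ integrals $J_M^{(k)}$ separately) but has the advantage of pinning down the constant explicitly as $d_0=(-1)^r\int_\pi^\infty e^{-iu}u^{-1}\,du$, in particular showing it is independent of $m$. Both approaches rely on the same ingredients---the support condition $\psi_Y(\eps)=0$ to kill the top boundary term and the bound $\Psi'(t)=O(|\log t|)$ from~\eqref{eq:prime} to control the contribution of differentiating $\Psi(u/M)^m$---so the difference is really one of packaging: alternating-series cancellation versus integration by parts.
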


\begin{proof}
    Write $D_R(t)$ (resp.\ $D_I(t)$) for the real (resp.\ imaginary) part of $D(t)\defeq\big(\frac{\log t}{\log M}\big)^r\psi_Y(t)\Psi(t)^m$.
    Via the change of variables $\sigma=t M$,
    \begin{align}\label{q:Oscillatory}
        \int_{\pi/M}^\eps e^{-it M}t^{-1}(\log t)^r\psi_Y(t)\Psi(t)^m\,dt
            &=(\log M)^r\int_{\pi}^{\eps M}e^{-i\sigma}\sigma^{-1}\left(\frac{\log(\sigma/M)}{\log M}\right)^r
                \psi_Y\left(\frac \sigma M\right)\Psi\left(\frac\sigma M\right)^m\,d\sigma\nonumber\\
            &=(\log M)^r\int_{\pi}^{\eps M}\frac{\cos\sigma+i\sin\sigma}{\sigma}\left(D_R\left(\frac \sigma M\right)+iD_I\left(\frac\sigma M\right)\right)\,d\sigma.
    \end{align}
    We know that $\left|D(\sigma/M)\right|\leq 1$ and in fact converges to $1$ as $M\to\infty$.
    In addition, $D_R(t)$ is differentiable in $t$ although $D_R'(t) \approx (\log t)^{r-1}t^{-1}(\log M)^{-r}$ as $t \to 0$ by \eqref{eq:prime}.
    Hence, for each $\sigma \in [\pi,\eps M]$, by the mean value theorem, there is $\hat \sigma \in [\sigma,\sigma+\pi]$ such that
    \begin{equation}\label{q:D}
        D_R\left( \frac{\sigma+\pi}{M} \right) - D_R\left( \frac{\sigma}{M} \right)
            = \frac{\pi}{M} D_R'\left(\frac{\hat \sigma}{M} \right) = O\left(\frac1{\sigma\log M}\right) \qquad \text{ as } M \to \infty.
    \end{equation}
    Without loss of generality, we can assume that $\eps M = 2\pi K + \pi$ is an odd multiple of $\pi$.
    Thus the oscillatory integral $\int_{\pi}^{\eps M} \frac{\cos \sigma}{\sigma} D_R(\sigma/M) \, d\sigma$ can be written as
    $\sum_{k=1}^K J_k$ for
    \begin{align*}
      J_k
        &\defeq\int_{2k\pi - \pi}^{2k\pi + \pi} \frac{\cos \sigma}{\sigma} D_R\left(\frac\sigma M\right)\,d\sigma
            =\int_{2k\pi - \pi}^{2k\pi} \frac{\cos \sigma}{\sigma} D_R\left(\frac\sigma M\right)\, d\sigma  + \int_{2k\pi}^{2k\pi + \pi} \frac{\cos \sigma}{\sigma} D_R\left(\frac\sigma M\right) \, d\sigma \\
        &\,=\int_{2k\pi - \pi}^{2k\pi}\left(\frac{\cos\sigma}{\sigma}D_R\left(\frac\sigma M\right)
            +\frac{ \cos(\sigma+\pi)}{\sigma+\pi}D_R\left(\frac{\sigma+\pi} M\right)\right)\,d\sigma\\
        &\,=\int_{2k\pi - \pi}^{2k\pi}\left(\frac{\pi\cos \sigma}{\sigma(\sigma+\pi)}D_R\left(\frac\sigma M\right)
            +O\left(\frac{ \cos(\sigma+\pi)}{\sigma(\sigma+\pi)}\frac1{\log M}\right)  \right)\, d\sigma=O\left(\frac1{ k(k+1)}\right),
    \end{align*}
    where we have added and subtracted terms and then applied \eqref{q:D} in the last line.
    This means that the sum and thus the oscillatory integral converges:
    $$\int_{\pi}^{\eps M} \frac{\cos \sigma}{\sigma} D_R\left(\frac\sigma M\right)\,d\sigma=\sum_{k=1}^K J_k=O(1).$$
    The arguments for the other oscillatory integrals in \eqref{q:Oscillatory} involving $\sin \sigma$ and/or $D_I(\sigma/M)$ are the same, which concludes the proof.
\end{proof}

We recall that $N=N(n)$ is a function of $n$.
\begin{lem}\label{l:I2}
    As $n\to\infty$,
    $I_2(N)=O\left(n^2(\log N)/N^2\right).
    $
\end{lem}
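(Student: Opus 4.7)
The plan is to split $I_2(N)=I_2^s(N)+I_2^l(N)$ according to the short range $|t|<\pi/N$ and the long range $\pi/N\le|t|\le\eps$. On the short range I bound pointwise: combining $|\Th'(t)|=O(|t|^{-1})$ from~\eqref{q:Th'Est} with $|F(t,n)|=O(n^2|t\log|t||)$ from~\eqref{q:FEst}, the $|t|^{-1}$ singularity of $\Th'$ is absorbed by the factor of $|t|$ in $F$, leaving $|\Th'(t)F(t,n)|=O(n^2|\log|t||)$. Since $\int_{|t|<\pi/N}|\log|t||\,dt=O(\log N/N)$, this piece contributes $O(n^2\log N/N^2)$, as required.

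On the long range I integrate by parts once. The boundary at $|t|=\eps$ vanishes because $\psi_Y(\pm\eps)=0$, while the boundary at $|t|=\pi/N$ is bounded by $|\Th'(\pi/N)F(\pi/N,n)|/N^2=O(n^2\log N/N^2)$, of the target order. The resulting volume term equals $-N^{-2}\int_{\pi/N\le|t|\le\eps}e^{-itN}\bigl(\Th''\psi_Y F+\Th'\psi_Y'F+\Th'\psi_Y F'\bigr)\,dt$; the middle piece, whose integrand is $O(n^2|\log|t||)$, contributes $O(n^2/N^2)$ directly.

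For the remaining two pieces, which carry the non-integrable singularity $|t|^{-1}\log|t|$, I would use $F(t,n)=it\Th(t)G(t,n)$ with $G(t,n)=\sum_{k=0}^{n-2}(n-1-k)\Psi(t)^k$, and $F'(t,n)=\Psi'(t)H(t,n)$ with $H(t,n)=\sum_{k=0}^{n-2}(k+1)\Psi(t)^k$. Using the identity $t\Th''(t)=-i\Psi''(t)-2\Th'(t)$ (obtained by differentiating $it\Th(t)=\Psi(t)-1$ twice) together with~\eqref{eq:second} and~\eqref{q:Th'Est}, one gets $t\Th''(t)=c|t|^{-1}+O(|\log|t||)$; substituting the expansion~\eqref{q:ThEst} for $\Th$ yields $\Th''(t)F(t,n)=i\psi_Y^{-1}\cdot\bigl(-c^2|t|^{-1}\log|t|+O(|t|^{-1})+O((\log|t|)^2)\bigr)G(t,n)\psi_Y$ (up to clearly lower-order cross-terms). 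Analogously, from~\eqref{q:Th'Est} and~\eqref{eq:prime}, $\Th'(t)\Psi'(t)=c^2 i|t|^{-1}\log|t|+O(|t|^{-1})+O((\log|t|)^2)$. Applied termwise to each $\Psi^k$ summand, Lemma~\ref{lem:tec} with $r=1$ bounds the leading $|t|^{-1}\log|t|\Psi^k$ integral by $O(\log N)$ (the negative-$t$ half reducing to the positive-$t$ case via $t\mapsto-t$ and the conjugation symmetry $\Psi(-t)=\overline{\Psi(t)}$), the $r=0$ case bounds the $|t|^{-1}\Psi^k$ remainder by $O(1)$, and the $(\log|t|)^2$ remainder integrates directly to $O(1)$. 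Summing against the $O(n^2)$ coefficients of $G$ and $H$ and dividing by $N^2$ yields $O(n^2\log N/N^2)$ for both pieces.

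The main obstacle I anticipate is the careful bookkeeping of the several subleading cross-terms arising from the products $t\Th''\cdot\Th$ and $\Th'\cdot\Psi'$: one must verify that each such term, when integrated against $e^{-itN}\psi_Y$ and any summand $\Psi^k$, contributes no more than $O(n^2\log N/N^2)$ after summation. A secondary technical point is the uniformity of Lemma~\ref{lem:tec} in its exponent parameter $m$, which is required because $m=k$ ranges up to $n-2$ in the sums defining $G$ and $H$; this uniformity is, however, clear from the proof of that lemma since the bounds there do not depend on the size of $m$.
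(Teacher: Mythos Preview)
Your proposal is correct and follows essentially the same route as the paper: split at $|t|=\pi/N$, bound the short range pointwise via $|\Th'F|=O(n^2|\log|t||)$, integrate by parts on the long range, discard the $\Th'\psi_Y'F$ piece directly, and reduce the $\Th''\psi_Y F$ and $\Th'\psi_Y F'$ pieces to Lemma~\ref{lem:tec} with $r=0,1$ after expanding the products. The only cosmetic difference is that you package the expansions through the identity $t\Th''=-i\Psi''-2\Th'$ and the named sums $G,H$, whereas the paper expands $\Th''$ and $F$ directly from~\eqref{q:Th'Est},~\eqref{q:FEst} and~\eqref{eq:psiat0}; your remark on the uniformity in $m$ of Lemma~\ref{lem:tec} is well taken and is indeed what the paper uses implicitly.
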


\begin{proof} From here onward we let $h=\pi/N$ and write
    \begin{align}\label{eq:i2}
        I_2(N)
            &=\frac1{iN}\int_{[-\eps,-h]\cup[h,\eps]}e^{-itN}\Th'(t)\psi_Y(t)F(t,n)\,dt+\frac1{iN}\int_{-h}^h e^{-itN}\Th'(t)\psi_Y(t)F(t,n)\,dt\nonumber\\
            &=I_2^+(N)+I_2^{(0)}(N).
    \end{align}

    For $I_2^{(0)}$ we will take advantage of the range of the integration, namely $[-h, h]=[-\frac{\pi}{N},\frac{\pi}{N}]$, while
    for $I_2^+$ we will use integration by parts.\medskip
    
    \underline{Estimating $I_2^{(0)}$}\medskip

    By \eqref{eq:psiat0} and \eqref{q:Th'Est},
    \begin{align*}
        \big(\Psi(t)-1\big)\Th'(t)
            &=\bigl(-ci|t|\log\left|t\right|+O(\left|t\right|)\bigr)\left(-c|t|^{-1}+O\left(\left|\log\left|t\right|\right|\right)\right)
            =c^2i\log\left|t\right|+O(1). 
    \end{align*}
    This together with the first equality in~\eqref{q:FEst},
    \begin{align*}
        I_2^{(0)}(N)
            &
            =\frac1{iN}\sum_{j=1}^{n-1}\sum_{k=0}^{j-1}\int_{-h}^h e^{-itN}\big(\Psi(t)-1\big)\Th'(t)\psi_Y(t)\Psi(t)^k\,dt\\
            &\,=\frac{c^2}N\sum_{j=1}^{n-1}\sum_{k=0}^{j-1}\int_{-h}^h e^{-itN}\log\left|t\right|\psi_Y(t)\Psi(t)^k\,dt
                +O\left(\frac{n^2}{N^2}\right).
    \end{align*}
    Recall $h=\frac{\pi}{ N}$.
    Then
    \begin{align*}
        \left|\int_{-\pi/N}^{\pi/N} e^{-itN}\log\left|t\right|\psi_Y(t)\Psi(t)^k\,dt\right|\ll \int_{-\pi/N}^{\pi/N}\log\left|t\right|\, dt\ll \frac{\log N}{N}.
    \end{align*}
    Thus,
    \begin{align}\label{eq:i20}
     |I_2^{(0)}(N)|\ll \frac{n^2\log N}{N^2}.
    \end{align}

    \underline{Estimating $I_2^{+}$}\medskip
    
    As already mentioned, here we will integrate by parts.
    
    \begin{align*}
         I_2^+(N)
        &=\left[\frac{e^{-itN}}{N^2}\Th'(t)\psi_Y(t)F(t,n)\right]_{-\pi/N}^{\pi/N}
        +\left[\frac{e^{-itN}}{N^2}\Th'(t)\psi_Y(t)F(t,n)\right]_{-\eps}^{\eps}\\
        \nonumber&-\frac1{N^2}\int_{[-\eps,-h]\cup[h,\eps]}e^{-itN}\Th''(t)\psi_Y(t)F(t,n)\,dt-\frac1{N^2}\int_{[-\eps,-h]\cup[h,\eps]}e^{-itN}\Th'(t)\psi_Y'(t)F(t,n)\,dt\\
        &\q-\frac1{N^2}\int_{[-\eps,-h]\cup[h,\eps]}e^{-itN}\Th'(t)\psi_Y(t)F'(t,n)\,dt.\nonumber
    \end{align*}

    Recall that $\psi_Y$ is supported on $(-\eps,\eps)$, so $\psi_Y(\eps)=\psi_Y(-\eps)
=0$. Thus, $\left[\frac{e^{-itN}}{N^2}\Th'(t)\psi_Y(t)F(t,n)\right]_{-\eps}^{\eps}=0$.
Using the same estimates as in dealing with $I_2^{(0)}(N)$ above, we obtain
\[
\left| \left[\frac{e^{-itN}}{N^2}\Th'(t)\psi_Y(t)F(t,n)\right]_{-\pi/N}^{\pi/N}\right|
\ll \frac{n^2\log N}{N^2}.
\]
So,
    
    \begin{align}\label{q:I2R1Terms}
        \nonumber I_2^+(N)=
        &-\frac1{N^2}\int_{[-\eps,-h]\cup[h,\eps]}e^{-itN}\Th''(t)\psi_Y(t)F(t,n)\,dt-\frac1{N^2}\int_{[-\eps,-h]\cup[h,\eps]}e^{-itN}\Th'(t)\psi_Y'(t)F(t,n)\,dt\\
        &\q-\frac1{N^2}\int_{[-\eps,-h]\cup[h,\eps]}e^{-itN}\Th'(t)\psi_Y(t)F'(t,n)\,dt+O\left(\frac{n^2\log N}{N^2}\right)\nonumber\\
       & =A_1+A_2+A_3+O\left(\frac{n^2\log N}{N^2}\right).
    \end{align}

    For the term $A_1$ in~\eqref{q:I2R1Terms}, by first applying the second equality in \eqref{q:Th'Est} and then using the first equality in~\eqref{q:FEst} together with~\eqref{eq:psiat0} (where we combine $C_0 i$ and $C$ into another constant $C_2$), 
    \begin{align*}
        A_1 &=-\frac{1}{N^2}\int_{[-\eps,-h]\cup[h,\eps]}{e^{-itN}\Th''(t)\psi_Y(t)F(t,n)}\,dt\\
            &=-\frac{c}{N^2}\int_{[-\eps,-h]\cup[h,\eps]}{e^{-itN}t^{-2}\psi_Y(t)F(t,n)}\,dt
                +O\left(\frac{n^2}{N^2}\int_{[-\eps,-h]\cup[h,\eps]}(\log\left|t\right|)^2\right)\\
            &=\frac{c^2i}{N^2}\sum_{j=1}^{n-1}\sum_{k=0}^{j-1}\int_{[-\eps,-h]\cup[h,\eps]}e^{-itN}|t|^{-1}\log\left|t\right|\psi_Y(t)\Psi(t)^k\,dt\\
            &\q-\frac{C_2}{N^2}\sum_{j=1}^{n-1}\sum_{k=0}^{j-1}\int_{[-\eps,-h]\cup[h,\eps]}e^{-itN}|t|^{-1}\psi_Y(t)\Psi(t)^k\,dt
                + O\left(\frac{n^2}{N^2}\right).
    \end{align*}

    We continue with $A_2$ in~\eqref{q:I2R1Terms}.
    By~\eqref{q:Th'Est} and~\eqref{q:FEst},
    \begin{align*}
        A_2=-\frac{1}{N^2}\int_{[-\eps,-h]\cup[h,\eps]}e^{-itN}\Th'(t)\psi_Y'(t)F(t,n)\,dt=O\left(\frac{n^2}{N^2}\int_{h}^{\eps}\left|\log t\right|\,dt\right)=O\left(\frac{n^2}{N^2}\right).
    \end{align*}
    
    Next, we look at the term $A_3$ in~\eqref{q:I2R1Terms}.
    By \eqref{eq:prime} and \eqref{q:Th'Est}, we find that
    \begin{align*}
        \Th'(t)\Psi'(t)
            &=\left(-c|t|^{-1}+O\left(\left|\log\left|t\right|\right|\right)\right)
                \left(-ci\log\left|t\right|+C'+O\left(\bigl|t\log\left|t\right|\bigr|\right)\right)\\
            &=c^2i|t|^{-1}\log\left|t\right|-cC'|t|^{-1}+O\left((\log\left|t\right|)^2\right).
    \end{align*}
    
    Recalling that $F'(t,n)=\Psi'(t)\sum_{j=1}^{n-1}j\Psi(t)^{j-1}$, we thus have 
    \begin{align*}
        A_3=&-\frac{1}{N^2}\int_{[-\eps,-h]\cup[h,\eps]}e^{-itN}\Th'(t)\psi_Y(t)F'(t,n)\,dt\\
            &\q=-\frac{c^2i}{N^2}\sum_{j=1}^{n-1}j\int_{[-\eps,-h]\cup[h,\eps]}e^{-itN}|t|^{-1}\log\left|t\right|\psi_Y(t)\Psi(t)^{j-1}\,dt\\
        &\q\q-\frac{c C'}{N^2}\sum_{j=1}^{n-1}j\int_{[-\eps,-h]\cup[h,\eps]}e^{-itN}|t|^{-1}\psi_Y(t)\Psi(t)^{j-1}\,dt+O\left(\frac{n^2}{N^2}\right).
    \end{align*}

    Putting  together the expressions for $A_1, A_2, A_3$ and recalling~\eqref{q:I2R1Terms},
    \begin{align*}
        I_2^+(N)
            &=\frac{c^2i}{N^2}\sum_{j=1}^{n-1}\sum_{k=0}^{j-1}\int_{[-\eps,-h]\cup[h,\eps]}e^{-itN}|t|^{-1}\log\left|t\right|\psi_Y(t)\Psi(t)^k\,dt\\
            &\q-\frac{c^2i}{N^2}\sum_{j=1}^{n-1}j\int_{[-\eps,-h]\cup[h,\eps]}e^{-itN}|t|^{-1}\log\left|t\right|\psi_Y(t)\Psi(t)^{j-1}\,dt\\
            &\q-\frac{C_2}{N^2}\sum_{j=1}^{n-1}\sum_{k=0}^{j-1}\int_{[-\eps,-h]\cup[h,\eps]}{e^{-itN}|t|^{-1}\psi_Y(t)\Psi(t)^k}\,dt\\
            &\q-\frac{c C'}{N^2}\sum_{j=1}^{n-1}j\int_{[-\eps,-h]\cup[h,\eps]}e^{-itN}|t|^{-1}\psi_Y(t)\Psi(t)^{j-1}\,dt+O\left(\frac{n^2}{N^2}\right).
    \end{align*}
    
    Recall $h=\pi/N$.
    By Lemma~\ref{lem:tec} with $r=1$, $m=k$ or $m=j-1$ and $M=N$,
    \begin{align*}
     \left|\int_{[-\eps,-h]\cup[h,\eps]}e^{-itN}|t|^{-1}\log\left|t\right|\psi_Y(t)\Psi(t)^m\,dt\right|
        \ll \left|\int_{[\pi/N,\eps]}e^{-itN}t^{-1}\log t\,\psi_Y(t)\Psi(t)^m\,dt\right|=O(\log N).
    \end{align*}
So, the first two sums in the expression of $I_2^+(N)$ are $O((n^2\log N)/N^2)$.
The remaining integrals and sums are treated similarly taking  $r=0$ in Lemma~\ref{lem:tec}.
Thus, $I_2^+(N)=((n^2\log N)/N^2)$.
This together with~\eqref{eq:i20} and~\eqref{eq:i2} gives the result.~\end{proof}

\subsubsection{Estimating the integral $I_3(N)$ defined in~\eqref{q:ITerms}}\label{sec:I3}

Recall that $a_n=n(1+o(1))$.
In this section, we shall use 

\begin{lem}\cite[Lemma 2.3]{MT22}\label{l:MTLem}
    Let $\ell:(0,\infty)\to(0,\infty)$ be a continuous slowly varying function.
    Then for  $n\in\N$ and for any $\rho\geq0$, $$\int_0^\eps t^\rho\ell(t^{-1})|\Psi(t)|^n\,dt=O\big(a_n^{-(\rho+1)}\ell(n)\big)=O\big(n^{-(\rho+1)}\ell(n)\big).$$
\end{lem}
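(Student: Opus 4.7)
The plan is to reduce the integral to one governed by the characteristic function of the stable limit via the change of variables $s = ta_n$, and to use a quantitative decay bound on $|\Psi(t)|^n$ to justify a dominated-convergence argument. The background ingredient is that the regularly varying tails of $X_1$ translate, via a standard computation of the real part of the characteristic function, into an expansion $1 - \Re\Psi(t) = c|t|\ell(|t|^{-1})(1+o(1))$ as $t\to 0$ (with $c > 0$ for $\alpha = 1$). Consequently $|\Psi(t)|^n \leq \exp\bigl(-c' n |t| \ell(|t|^{-1})\bigr)$ for a constant $c' > 0$ and all $|t|$ sufficiently small.

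With this bound in hand, I would split the integral at $t_\ast \defeq 1/a_n$. On $[0, 1/a_n]$ the trivial bound $|\Psi(t)|^n \leq 1$ reduces the estimate to Karamata's theorem for integrals of slowly varying functions, giving $\int_0^{1/a_n} t^\rho \ell(t^{-1})\, dt = O\bigl(a_n^{-(\rho+1)}\ell(a_n)\bigr)$. On $[1/a_n,\eps]$, substituting $s = ta_n$ produces
\[
    a_n^{-(\rho+1)}\int_1^{\eps a_n} s^\rho\, \ell(a_n/s)\, |\Psi(s/a_n)|^n \, ds,
\]
and the relation $a_n = n\ell(a_n)(1+o(1))$ rewrites the exponential bound as $|\Psi(s/a_n)|^n \leq \exp\bigl(-c'' s\,\ell(a_n/s)/\ell(a_n)\bigr)$. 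Pointwise in $s$, both $|\Psi(s/a_n)|^n$ converges to the modulus of the stable characteristic function (which for $\alpha = 1$ is $e^{-cs}$) and $\ell(a_n/s)/\ell(a_n) \to 1$, so dominated convergence pins the inner integral down at $O(\ell(a_n))$, giving the desired $O\bigl(a_n^{-(\rho+1)}\ell(a_n)\bigr)$.

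The main obstacle is producing a single integrable majorant on $[1, \eps a_n]$ that works uniformly in $n$, since the slowly varying factor $\ell(a_n/s)/\ell(a_n)$ can a priori drift with $s$. Potter's bounds handle this: for any fixed small $\eta \in (0,1)$, $\ell(a_n/s)/\ell(a_n) \geq \tfrac12\min(s^{-\eta}, s^\eta)$ once $a_n$ is large enough, so $|\Psi(s/a_n)|^n \leq \exp(-c''' s^{1-\eta})$ holds uniformly for $s \geq 1$, and $s^\rho e^{-c''' s^{1-\eta}}$ is integrable on $[1,\infty)$. Finally, the replacement $\ell(a_n) = \ell(n)(1+o(1))$ follows from $a_n = n\ell(a_n)(1+o(1))$ by slow variation, since then $\log a_n = (1+o(1))\log n$ and $\ell(a_n)/\ell(n) \to 1$.
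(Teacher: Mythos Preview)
The paper does not give its own proof of this lemma: it is quoted from \cite[Lemma~2.3]{MT22}, with only the one-line remark that the argument there carries over verbatim to $\alpha=1$ and $\rho\geq 0$. So there is no in-paper proof to compare against.

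Your sketch is correct and is essentially the standard argument (and almost certainly the one in \cite{MT22}): exponential decay of $|\Psi|^n$ from $1-\Re\Psi(t)\sim c|t|$, split at $1/a_n$, Karamata on the inner piece, change of variables plus Potter bounds for a uniform integrable majorant on the outer piece. One point worth tightening: you use the symbol $\ell$ for two different slowly varying functions --- the one coming from the tails of $X_1$ (hence appearing in $1-\Re\Psi$ and in $a_n=n\ell(a_n)$) and the arbitrary one sitting in the integrand $t^\rho\ell(t^{-1})$. In the paper's setting the tail $\ell$ is actually constant (recall $a_n=n(1+o(1))$), so the exponential bound is simply $|\Psi(t)|^n\le e^{-c'n|t|}$; the $\ell$ in the statement is a separate, arbitrary slowly varying function. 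After the change of variables on $[1/a_n,\eps]$ you therefore need Potter bounds \emph{twice}: once on the tail $\ell$ inside the exponent (trivial here), and once on the integrand $\ell$ to control $\ell(a_n/s)/\ell(a_n)$ by $Cs^\eta$. Your majorant $s^{\rho+\eta}e^{-c'''s^{1-\eta}}$ then works as written. With that clarification the argument is complete.
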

Note that \cite[Lemma 2.3]{MT22} is phrased in terms of $\alpha\ne 1$ stable laws and $\rho>0$, but the proof is word-for-word the same when $\alpha=1$ and $\rho=0$.

\begin{lem}\label{p:I3}
    As $n\to\infty$, $I_3(N)=O(n^2(\log N)^2/N^2)$.
\end{lem}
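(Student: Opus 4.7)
The plan is to split $I_3(N)$ at $|t|=h\defeq\pi/N$ as $I_3(N)=I_3^{(0)}(N)+I_3^+(N)$, in direct analogy with the treatment of $I_2$ in Lemma~\ref{l:I2}. For the near-zero part $I_3^{(0)}(N)$ on $[-h,h]$, the length of the interval already supplies a factor of $1/N$, while $|\Th(t)|=O(|\log|t||)$ from~\eqref{q:ThEst} and $|F'(t,n)|=O(n^2|\log|t||)$ from~\eqref{q:F'Est} give
$$|I_3^{(0)}(N)|\ll\frac1N\int_{-\pi/N}^{\pi/N}n^2(\log|t|)^2\,dt=O\Bigl(\frac{n^2(\log N)^2}{N^2}\Bigr),$$
which already matches the target rate.

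For the outer part $I_3^+(N)$ over $[-\eps,-h]\cup[h,\eps]$, a second integration by parts would introduce $F''(t,n)$, which through $\Psi''(t)\sim c|t|^{-1}$ and $(\Psi'(t))^2\sum j(j-1)\Psi^{j-2}$ is too singular at $t=0$ to yield the desired rate. Instead I would apply the Katznelson modulus-of-continuity trick: setting $G(t)\defeq\Th(t)\psi_Y(t)F'(t,n)$ and using $e^{-i(t-h)N}=-e^{-itN}$ for $h=\pi/N$, the substitution $t\mapsto t-h$ gives, up to boundary corrections,
$$2\int_{[-\eps,-h]\cup[h,\eps]}e^{-itN}G(t)\,dt=\int_{[-\eps+h,-h]\cup[h,\eps-h]}e^{-itN}\bigl(G(t)-G(t-h)\bigr)\,dt+B_N,$$
where $B_N$ collects integrals over sub-intervals of length $O(h)$ near $\pm h$ and $\pm\eps$ (the latter vanishing thanks to $\psi_Y(\pm\eps)=0$); on these sub-intervals $|G|\ll n^2(\log N)^2$, so $B_N=O(n^2(\log N)^2/N)$ and contributes $O(n^2(\log N)^2/N^2)$ after the $1/N$ prefactor in $I_3^+$. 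I would then decompose $G(t)-G(t-h)$ into three pieces corresponding to increments of $\Th$, of $\psi_Y$ (which is $C^2$, so $|\psi_Y(t)-\psi_Y(t-h)|=O(h)$), and of $F'$, controlled by~\eqref{q:ThCont} and~\eqref{q:F'Cont}. The dominant contributions all have the schematic form $(h/|t|)(\log|t|)^j n^2$, and $\int_h^\eps(h/t)(\log t)^j\,dt=O(h|\log h|^{j+1})=O((\log N)^{j+1}/N)$, so after the overall $1/N$ prefactor and $n^2$ factor they contribute at most $O(n^2(\log N)^2/N^2)$.

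The main obstacle is the non-explicit remainder in~\eqref{q:F'Cont}, which when multiplied by $\Th(t-h)\psi_Y(t-h)$ produces $h|\log t|\sum_{j=1}^n j^2|\Psi(t)|^j|\log t|$ under the integral; bounding $\sum j^2|\Psi(t)|^j$ trivially by $n^3$ would be too crude. To tame it I would exploit that~\eqref{eq:psiat0} yields $1-|\Psi(t)|\gtrsim|t|$ for small $|t|$, so $\sum_{j=1}^n j^2|\Psi(t)|^j=O(\min(n^3,|t|^{-3}))$; splitting the integral at $t=1/n$ (note $h<1/n$ under the range restriction $n(\log N)^2=o(N)$) and using this bound in each subrange, together with Lemma~\ref{l:MTLem} where convenient, each subrange contributes at most $O(n^2(\log N)^2/N)$. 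Assembling the inner estimate, the boundary corrections, and the modulus-of-continuity bounds then yields $I_3(N)=O(n^2(\log N)^2/N^2)$, as required.
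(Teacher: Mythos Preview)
Your proposal is correct and matches the paper's proof: the same split at $h=\pi/N$, the same Katznelson modulus-of-continuity trick on the outer piece, and the same decomposition of $G(t)-G(t-h)$ by increments of $\Theta$, $\psi_Y$, and $F'$. The only cosmetic difference is in the bookkeeping for the remainder in~\eqref{q:F'Cont}: the paper quotes Lemma~\ref{l:MTLem} to get $\int_0^\eps(\log t)^2|\Psi(t)|^j\,dt\ll(\log j)^2/j$ and hence $\sum_{j\le n} j^2(\log j)^2/j\ll n^2(\log n)^2$, whereas your $\min(n^3,|t|^{-3})$ split at $t=1/n$ is an equivalent elementary substitute, and for the main term in the $F'$-increment the paper invokes Lemma~\ref{lem:tec} where your direct bound $\bigl|\sum_j j\Psi^{j-1}\bigr|\le n^2$ already suffices.
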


In this section we will use a modulus-of-continuity argument as in \cite{Katzn} and write
\begin{align}\label{eq:i3}
 I_3(N)=\frac1{iN}\left(\int_h^\eps+\int_{-h}^h+\int_{-\eps}^{-h}\right)e^{-itN}\Th(t)\psi_Y(t)F'(t,n)\,dt= J^+(N)+J^{(0)}(N)+J^-(N).
\end{align}

We start with $J^+(N)$ and $J^-(N)$.
\begin{lem}\label{l:J+}
    As $n\to\infty$, $J^+(N)=O(n^2(\log N)^2/N^2)$.
    A similar estimate holds for $J^-(N)$.
\end{lem}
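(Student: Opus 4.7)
The plan is to adapt the modulus-of-continuity trick from \cite[Chapter~1]{Katzn}: with the choice $h = \pi/N$ already in force, one has $e^{-ihN} = e^{-i\pi} = -1$, and this lets me turn $J^+(N)$ into an integral of an oscillation. Writing $\phi(t) \defeq \Th(t)\psi_Y(t)F'(t,n)$ and substituting $t \mapsto t+h$ in the integral defining $J^+(N)$ gives
$$J^+(N) \;=\; -\frac{1}{iN}\int_0^{\eps-h}e^{-itN}\phi(t+h)\,dt,$$
so averaging with the original definition of $J^+(N)$ yields
$$2J^+(N) \;=\; \frac{1}{iN}\int_h^{\eps-h}e^{-itN}\big(\phi(t)-\phi(t+h)\big)\,dt \;+\; \frac{1}{iN}\int_{\eps-h}^{\eps}e^{-itN}\phi(t)\,dt \;-\; \frac{1}{iN}\int_0^h e^{-itN}\phi(t+h)\,dt.$$

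The next step is to decompose $\phi(t)-\phi(t+h)$ via the product rule into three pieces (one per factor of $\phi$) and bound each using the increments already collected in Section~\ref{sec:Est}: \eqref{q:ThCont} gives $|\Th(t)-\Th(t+h)| \ll h/t$, the $C^2$ regularity of $\psi_Y$ gives $|\psi_Y(t)-\psi_Y(t+h)| \ll h$, and \eqref{q:F'Cont} controls $|F'(t,n)-F'(t+h,n)|$. Combined with the sup bounds $|\Th(t)|\ll|\log t|$ from \eqref{q:ThEst} and $|F'(t,n)|\ll n^2|\log t|$ from \eqref{q:F'Est}, each of the three resulting integrals reduces to an elementary one in $t$, possibly weighted by powers of $|\Psi(t)|^j$ which are then controlled via Lemma~\ref{l:MTLem}. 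The dominant contribution comes from the $\Th$-increment term,
$$\frac{1}{N}\int_h^{\eps}\frac{h}{t}\cdot n^2|\log t|\,dt \;\asymp\; \frac{hn^2}{N}\int_h^{\eps}\frac{|\log t|}{t}\,dt \;\asymp\; \frac{hn^2(\log N)^2}{N} \;=\; \frac{n^2(\log N)^2}{N^2},$$
while the $\psi_Y$-increment term contributes only $O(n^2/N^2)$, and the $F'$-increment term is handled via Lemma~\ref{l:MTLem} applied with $\rho=0$ and $\ell(s)=(\log s)^2$, producing $(h/N)\sum_{j\le n} j(\log j)^2 \ll n^2(\log N)^2/N^2$. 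The two short boundary integrals are over intervals of length $h = \pi/N$ on which $|\phi|\ll n^2(\log t)^2$, so they also contribute at most $O(n^2(\log N)^2/N^2)$.

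The estimate for $J^-(N)$ is obtained by the symmetric argument on the other side, using the substitution $t \mapsto t-h$. The step I expect to be the main obstacle is the $F'$-increment piece: a naive bound on the error term $h\sum_{j\le n}j^2|\log t||\Psi(t)|^j$ appearing in \eqref{q:F'Cont} would grow cubically in $n$, so one must integrate in $t$ before summing in $j$ in order to apply Lemma~\ref{l:MTLem}, which trades one factor of $j$ for $(\log j)^2/j$ and leaves only a single extra $\log$ factor, keeping the total within $O(n^2(\log N)^2/N^2)$.
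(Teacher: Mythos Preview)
Your approach is essentially the paper's: the same modulus-of-continuity averaging (with the cosmetic difference that you shift $t\mapsto t+h$ while the paper shifts $t\mapsto t-h$), the same three-factor decomposition of the increment of $\phi=\Th\psi_Y F'$, and the same treatment of the short boundary intervals.

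There is one loose step, however, in your handling of the $F'$-increment. You write that it is ``handled via Lemma~\ref{l:MTLem} with $\rho=0$ and $\ell(s)=(\log s)^2$'', but that lemma only disposes of the \emph{error} term in \eqref{q:F'Cont}, namely $h\sum_j j^2|\log t|\,|\Psi(t)|^j$. The \emph{leading} term $cih|t|^{-1}\sum_{j} j\Psi(t)^{j-1}$ carries a factor $|t|^{-1}$ that Lemma~\ref{l:MTLem} (which requires $\rho\ge 0$) cannot absorb. After multiplication by $|\Th|\ll|\log t|$, this leading piece has exactly the shape of your $\Th$-increment integrand $ht^{-1}n^2|\log t|$, so your own naive absolute-value bound there already covers it and still gives $O(n^2(\log N)^2/N^2)$; alternatively, the paper keeps the oscillatory factor $e^{-itN}$ and appeals to Lemma~\ref{lem:tec} for this piece. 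Either route closes the gap, but your outline should say explicitly that the leading term of \eqref{q:F'Cont} requires its own treatment rather than Lemma~\ref{l:MTLem}.
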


\begin{proof}We treat the integral $J^+(N)$. The estimate for $J^-(N)$ follows similarly.
    The change of coordinates $t\mapsto t-h$ yields
    $$J^+(N)\defeq\frac1{iN}\int_h^\eps e^{-itN}\Th(t)\psi_Y(t)F'(t,n)\,dt=-\frac1{iN}\int_{2h}^{\eps+h}e^{-itN}\Th(t-h)\psi_Y(t-h)F'(t-h,n)\,dt$$
    and so
    \begin{align}\label{eq:j+}
        \nonumber 2J^+(N)
            &=\frac1{iN}\int_{2h}^\eps e^{-itN}\big(\Th(t)\psi_Y(t)F'(t,n)-\Th(t-h)\psi_Y(t-h)F'(t-h,n)\big)\,dt\\
            \nonumber &\q+\frac1{iN}\int_h^{2h}e^{-itN}\Th(t)\psi_Y(t)F'(t,n)\,dt+\frac1{iN}\int_{\eps-h}^\eps e^{-itN}\Th(t)\psi_Y(t)F'(t,n)\,dt\\
            &\rdefeq J_1(N)+J_2(N)+J_3(N).
    \end{align}    
    
    \underline{The integral $J_1(N)$ in~\eqref{eq:j+}.}\medskip
            
    Compute that
    \begin{align*}
        &\Th(t)\psi_Y(t)F'(t,n)-\Th(t-h)\psi_Y(t-h)F'(t-h,n)\\
        &\q=\big(\Th(t)-\Th(t-h)\big)\psi_Y(t)F'(t,n)+\Th(t-h)\psi_Y(t-h)\big(F'(t,n)-F'(t-h,n)\big)\\
        &\q\q+\Th(t-h)\big(\psi_Y(t)-\psi_Y(t-h)\big)F'(t-h,n)\\
        &\q=\big(\Th(t)-\Th(t-h)\big)\psi_Y(t)F'(t,n)+\Th(t-h)\psi_Y(t-h)\big(F'(t,n)-F'(t-h,n)\big)+O\left(n^2h(\log\left|t\right|)^2\right),
    \end{align*}
    where we have used that $\psi_Y$ is $C^2$ together with~\eqref{q:FEst} and~\eqref{q:ThEst} in the estimate for the big-$O$ term.
    Hence,
    \begin{align}\label{q:J1Terms}
        J_1(N)&=\frac1{iN}\int_{2h}^\eps e^{-itN}\big(\Th(t)\psi_Y(t)F'(t,n)-\Th(t-h)\psi_Y(t-h)F'(t-h,n)\big)\,dt\nonumber\\
            &\,=\frac1{iN}\int_{2h}^\eps e^{-itN}\big(\Th(t)-\Th(t-h)\big)\psi_Y(t)F'(t,n)\,dt\nonumber\\
            &\q+\frac1{iN}\int_{2h}^\eps e^{-itN}\Th(t-h)\psi_Y(t-h)\big(F'(t,n)-F'(t-h,n)\big)\,dt+O\left(\frac{n^2}{N^2}\right)\nonumber\\
            &=B_1(N)+B_2(N)+O\left(\frac{n^2}{N^2}\right).
    \end{align}
    
    We first consider $B_1(N)$ defined in~\eqref{q:J1Terms}.
    Since $h=\frac{\pi}{N}$, using~\eqref{q:F'Est} and~\eqref{q:ThCont}, we obtain
    \begin{align*}
        \big(\Th(t)-\Th(t-h)\big)F'(t,n)=O\left(h\left|t\right|^{-1}\cdot n^2\left|t\log\left|t\right|\right|\right)
            =O\left(\frac{n^2\left|\log\left|t\right|\right|}{N}\right).
    \end{align*}
    Thus,
    \begin{align}\label{eq:b1}
       B_1(N)=O\left(\frac{n^2}{N^2}\int_{2\pi/N}^\eps\left|\log t\right|\, dt\right)=O\left(\frac{n^2}{N^2}\right).
    \end{align}
    
    Next, we consider $B_2(N)$ defined in~\eqref{q:J1Terms}. Using that $h=\pi/N$, together with \eqref{q:F'Cont} and \eqref{q:ThEst}, we find that $N\to\infty$ and $t\to 0$,
    \begin{align*}
        &\Th(t-h)\big(F'(t,n)-F'(t-h,n)\big)\\
        &\q=\left(-c\log\left|t-h\right|+C_2+O\left(\bigl|t\log\left|t\right|\bigr|\right)\right)\left(cih|t|^{-1}(1+o(1))\sum_{j=1}^{n-1}j\Psi(t)^{j-1}
            +O\left(h\sum_{j=1}^{n}j^2\log\left|t\right||\Psi(t)|^{j}\right)\right)\\
        &\q=D_0N^{-1}|t|^{-1}\log\left|t\right|(1+o(1))\sum_{j=1}^{n-1}j\Psi(t)^{j-1}
            +O\left(N^{-1}\sum_{j=1}^{n}j^2(\log\left|t\right|)^2|\Psi(t)|^{j}\right),
    \end{align*}   
    for a complex constant $D_0$.
    Hence, as $N\to\infty$,
    \begin{align*}
    B_2(N)
        =\frac{D_0(1+o(1))}{i N^2}\sum_{j=1}^{n-1}j\int_{2\pi/N}^\eps e^{-itN}t^{-1}\log t\,\Psi(t)^{j-1}\, dt
            +O\left(\frac{1}{N^2}\sum_{j=1}^{n}j^2\int_{0}^\eps(\log t)^2|\Psi(t)|^{j}\, dt\right)
    \end{align*}  
    By Lemma~\ref{l:MTLem} with $\ell(t)=(\log\left|t\right|)^2$ and $\rho=0$,
    \begin{align*}
    \int_{0}^\eps(\log t)^2|\Psi(t)|^{j}\, dt\ll (\log j)^2 j^{-1}.
    \end{align*}  
    which implies that
    $
    \sum_{j=1}^{n}j^2\int_{0}^\eps(\log t)^2|\Psi(t)|^j\, dt\ll  n^2(\log n)^2.
    $
    So,
    \begin{align*}
    B_2(N)=\frac{D_0 (1+o(1))}{i N^2}\sum_{j=1}^{n-1}j\int_{2\pi/N}^\eps e^{-itN}t^{-1}(\log t)^2\Psi(t)^{j-1}\, dt+
    O\left(\frac{n^2(\log n)^2}{N^2}\right).
    \end{align*}
    By Lemma~\ref{lem:tec} with $r=2$, $m=j-1$ and $M=N/2$, $\left|\int_{2\pi/N}^\eps e^{-itN}t^{-1}(\log t)^2\Psi(t)^{j-1}\, dt\right|=O((\log N)^2)$. So,
    \begin{align*}
    B_2(N)=
    O\left(\frac{n^2(\log N)^2}{N^2}\right).
    \end{align*}
    This together with~\eqref{eq:b1} and~\eqref{q:J1Terms} gives $J_1(N)=O(n^2(\log N)^2/N^2)$.\medskip
    
    \underline{The integral $J_2(N)$ and $J_3(N)$ in~\eqref{eq:j+}.}\medskip
    
    The argument is essentially the same. Here we just take advantage of the range
    of integration. 
    We provide the argument for $J_2$. 
    Proceeding directly from the definition,
    $|J_2(N)|\ll \frac{1}{N}\int_{\pi/N}^{2\pi/N}|\Th(t)F'(t,n)|\, dt$.
    By~\eqref{q:F'Est} and~\eqref{q:ThEst}, $|\Th(t)F'(t,n)|\ll n^2 (\log\left|t\right|)^2$.
    Thus, 
    \begin{align*}
    |J_2(N)|\ll \frac{n^2}{N}\int_{\pi/N}^{2\pi/N}(\log t)^2\, dt\ll \frac{n^2 (\log  N)^2}{N^2}.
    \end{align*}
    By a similar argument, $|J_3(N)|\ll n^2/N^2$.
    The estimate for $J^+$ follows by putting together the estimates for $J_1, J_2, J_3$.
\end{proof}

Equipped with Lemma~\ref{l:J+}, we can complete the proof of Lemma~\ref{p:I3}.

\begin{proof}[Proof of Lemma~\ref{p:I3}] Recall equation~\eqref{eq:i3}. 
By Lemma~\ref{l:J+}, $J^+(N)$ and $J^{-}(N)$ are $O(n^2(\log N)^2/N^2)$.
The estimate for $J^{(0)}(N)$ follows by the argument used above in dealing with $J_2(N)$.
Again taking advantage of the range of the integration $[-h, h]=[-\frac{\pi}{N},\frac{\pi}{N}]$,  $|J^{(0)}(N)|\ll n^2 (\log  N)^2/N^2$.
\end{proof}

\subsection{Completing the proof of Theorem~\ref{t:LD}}

We first complete the proof of Proposition~\ref{p:IExp}.

\begin{proof}[Proof of Proposition~\ref{p:IExp}] 
    The conclusion follows from~\eqref{q:ITerms}, Lemma~\ref{l:I1}, Lemma~\ref{l:I2} and Lemma~\ref{p:I3}.
\end{proof}

\begin{proof}[Proof of Theorem~\ref{t:LD}] 
    Recalling~\eqref{q:IDef} and~\eqref{q:INDef}, $I=I(N)-I(N+g(n))$.
    We will use Proposition~\ref{p:gn} or more precisely its consequence Corollary~\ref{cor:I} with $g(n)\ge N^2$.
    By Proposition~\ref{p:IExp}, $I(N)$ and $I(N+g(n))$ are $O(n^2(\log  N)^2/N^2)$
    and $O(n^2(\log  N)^2/g(n)^2)$, respectively. 
    The conclusion follows from Corollary~\ref{cor:I}.
\end{proof}

\end{document}